\def\margin_comment#1{\marginpar{\sffamily{\tiny #1\par}\normalfont}}
\definecolor{fucsia}{RGB}{196,0,98}
\newcommand{\rvline}{\hspace*{-\arraycolsep}\vline\hspace*{-\arraycolsep}}
\pgfplotsset{compat=1.13}
\newtheorem{thm}{Theorem}[section]
\numberwithin{equation}{section} 
\numberwithin{figure}{section} 
\theoremstyle{plain}
\newtheorem*{thm*}{Theorem}
\theoremstyle{definition}
\theoremstyle{plain}
\newtheorem{thm_A}{Theorem}
\newtheorem*{defn*}{Definition}
\theoremstyle{plain}
\theoremstyle{plain} 
\theoremstyle{plain}
\newtheorem{prop}[thm]{Proposition} 
\theoremstyle{remark}
\newtheorem{ex}[thm]{Example}
\theoremstyle{remark}
\newtheorem{rem}[thm]{Remark}
\theoremstyle{plain}
\theoremstyle{plain}
\theoremstyle{plain}
\newtheorem{lem}[thm]{Lemma} 
\theoremstyle{definition}
\newtheorem{defn}[thm]{Definition}
\newtheorem*{acknowledgment*}{Addentum}
\theoremstyle{plain}
\newtheorem*{ex*}{Example}
\theoremstyle{plain}
\begin{document}
\title[Garsideness properties of structure groups]{On some Garsideness properties of structure groups of set-theoretic solutions of the Yang-Baxter equation}
\author{Fabienne Chouraqui}
\begin{abstract}
There exists a multiplicative homomorphism from the braid group $B_{k+1}$  on $k +1$ strands to the Temperley–Lieb algebra $TL_k$. Moreover,  the homomorphic images in $TL_k$  of the simple elements   form a basis  for the vector space underlying $TL_k$.
In analogy with  the case of $B_k$, there exists a multiplicative homomorphism from  the structure group $G(X,r)$ of  a   non-degenerate, involutive set-theoretic solution $(X,r)$,  with $\mid X \mid =n$, to an algebra, which  extends to a homomorphism of algebras. We construct a finite basis of the underlying vector space of  the  image of  $G(X,r)$ using the Garsideness properties of $G(X,r)$.

\end{abstract}
\maketitle
\section*{Introduction}
The quantum Yang-Baxter equation is an equation in mathematical physics and it lies in the  foundation of  the theory of quantum groups. One of the fundamental problems is to find all the solutions of this equation.   Drinfeld suggested the study of a particular class of solutions,  derived from the so-called set-theoretic solutions  \cite{drinf}.  A  set-theoretic solution of the Yang-Baxter equation is a pair $(X,r)$, where $X$ is a set and 
\[r: X \times X \rightarrow X \times X\,,\;\;\; r(x,y)=(\sigma_x(y),\gamma_y(x))\]
is a bijective map satisfying $r^{12}r^{23}r^{12}=r^{23}r^{12}r^{23}$, where $r^{12}=r \times Id_X$,  $r^{23}=Id_X\times r$. 
            
 Non-degenerate and involutive set-theoretic solutions of the quantum Yang-Baxter equation are  intensively investigated and they give rise to several algebraic structures associated to them. One of these  is the structure group of a solution, $G(X,r)$, which is defined by $G(X,r)=\operatorname{Gp}\langle X\mid x_ix_j=x_kx_l\,\,; r(x_i,x_j)=(x_k,x_l)\rangle$ in \cite{etingof}. 
             
  \setlength\parindent{10pt} There is a one-to-one correspondence between the structure groups of non-degenerate,  involutive  set-theoretic solutions and a particular class of groups, the so-called Garside groups, with a particular presentation \cite{chou_art}.  Garside groups  have been defined as a generalization in some sense of the braid groups, and the finite-type Artin groups \cite{DePa}.  A  monoid $M$ is \emph{Garside}  if it is cancellative with  $1$  the unique invertible element, it  admits  a structure of lattice with respect to left-divisibility and right-divisibility, and there exists a balanced  element $\Delta$  (i.e. its set of left and right divisors coincide, and  is denoted by $\operatorname{Div}(\Delta)$),    with  $\operatorname{Div}(\Delta)$ a finite generating set of $M$.  A Garside monoid admits a left and right group of fractions which  coincide and is called a \emph{Garside group}.\\

  \setlength\parindent{10pt} There exists an exact  sequence $1 \rightarrow P_k \rightarrow B_k  \rightarrow S_k \rightarrow 1$, where $B_k$ is the braid group and $P_k$ the pure braid group and  more generally there exists  an exact  sequence $1 \rightarrow N \rightarrow A \rightarrow W\rightarrow 1$, where  $A$ is an Artin group of finite type and $W$ the corresponding finite Coxeter group. Furthermore,  there is a bijection between the elements in $W$ and the set $\operatorname{Div}(\Delta)$. In the case of $B_k$, the  elements in $\operatorname{Div}(\Delta)$   are called \emph{simple elements} and they can be seen as lifts of noncrossing partitions,  viewed as elements of the symmetric group $S_k$  to $B_k$. \\

  \setlength\parindent{10pt} In  \cite{chou_godel2}, we attempt to answer in the context of the structure groups, the question  posed in \cite{bessis2},  whether any Garside group admits a Coxeter-like quotient group, that is a finite quotient group that plays the role  the symmetric group $S_k$  plays for   $B_k$ or the role Coxeter groups play for the finite-type Artin groups.  We give  a partial  answer to this question. Indeed,  we show that  if  $(X, r)$ is  a  solution, with $\mid X \mid =n$, that satisfies  a certain condition $(C)$, there is a short exact sequence $1 \rightarrow N \rightarrow G(X,r)  \rightarrow W  \rightarrow 1$, where $N$ is a normal free abelian subgroup of rank $n$ and $W$ is a finite group of order $2^n$.  Moreover, $W$  is  a Coxeter-like group and there is a bijection between the elements in $W$ and the set $\operatorname{Div}(\Delta)$.  In \cite{deh_coxeterlike}, P. Dehornoy  proves that  the condition $(C)$ may be dropped and that, in   general,  there exists  a Coxeter-like  group $W$ defined by $G(X,S)/N$  of order $m^n$, where $m\geq 1$  is an integer called \emph{the class of the solution}.  There is a bijection between the elements in $W$ and the set $\operatorname{Div}(\Delta^{m-1})$,   $N=\langle \theta_1,...,\theta_n\rangle$ is  a free abelian  subgroup of  $G(X,r)$ of rank $n$, and $\theta_1,...,\theta_n$ are called \emph{the  frozen elements of length $m$}    \cite{deh_coxeterlike}.

  \setlength\parindent{10pt} There is a multiplicative homomorphism from the braid group $B_{n+1}$  on $n +1$ strands to the Temperley–Lieb algebra $TL_n$.   The Temperley–Lieb algebra $TL_n$ (of type $A_n$)   is an associative, unital $\mathbb{Z}[v,v^{-1}]$ 
algebra of dimension equal to the $(n + 1)$th Catalan number.  It was originally defined by Temperley and Lieb (see \cite{temperley}).  Alternatively, it can be viewed as a quotient algebra of the Iwahori–Hecke algebra $H$ of type $A_n$.  Several bases for the vector space underlying the Temperley-Lieb algebra  are known.  There is a particular one based on the Garsideness of the dual braid monoid   \cite{zinno}. Indeed, it is proved there that the homomorphic images in $TL_n$ of the simple elements   form a $\mathbb{Z}[v,v^{-1}]$-linear basis  for the vector space underlying the Temperley-Lieb algebra.  In \cite{gobet},  some results from \cite{zinno} are presented in a simpler way,  and   an additional   basis is found. 

  \setlength\parindent{10pt} In this paper, we study the question whether a result of the kind obtained in \cite{zinno,gobet} can be obtained for other Garside groups and in particular for structure groups of solutions. In analogy with  the case of $B_k$, there exists a multiplicative homomorphism from  the structure group $G(X,r)$ of  a   non-degenerate, involutive set-theoretic solution $(X,r)$,  with $\mid X \mid =n$, to a matrix algebra.  Indeed, $G(X,r)$ is a Bieberbach group of rank  $n$, that  is a torsion-free crystallographic group   \cite{gateva_van}, \cite{jespers_book}[p.218]. So, from  the action of $G(X,r)$,  there is an embedding 	$\psi: \; G(X,r)  \; \rightarrow    \; GL_{n+1}(\mathbb{R})$,  with a  permutation matrix  in the upper $n\times n$ block and a translation part in the last column, and by linear extension $\psi$ induces  a  unital homomorphism of  $\mathbb{R}$-algebras $\hat{\psi}:  \mathbb{R}G  \rightarrow    M_{n+1}(\mathbb{R}) $, where $\mathbb{R}G$ denotes the group algebra of $G(X,r)$.
 
 \begin{figure}[h]
 	\centering
 	\begin{equation}\label{eqn-psi-gp-alg}
 	\hat{\psi}: \; \mathbb{R}G  \; \rightarrow    \; M_{n+1}(\mathbb{R}) 
 	\end{equation}
 	\[  \sum\limits_{g \in G}k_g\,g  \;    \mapsto\; 	 \sum\limits_{g \in G}k_g\,\psi(g) 	\]
 \end{figure}
 The basis of   $\mathbb{R}G$ is the set of all group elements $g \in G(X,r)$ and every element in  $\mathbb{R}G$  can be written as    $\sum\limits_{g \in G}k_g\,g$, with $k_g\neq 0$ for only  finitely many elements $g \in G(X,r)$.
 The subalgebra  of  $M_{n+1}(\mathbb{R}) $, $\operatorname{Im}(\hat{\psi})=\{\sum\limits_{g \in G}k_g\,\psi(g) 	 \mid k_g \in \mathbb{R}\}$  is spanned by  the infinite set $\{\psi(g) 	\mid g \in G(X,r)\}$. However,  we show that it is finite-dimensional.  We prove the following:
 \begin{thm_A}\label{theo}
	Let $(X,r)$ be  a  non-degenerate involutive set-theoretic solution of the  quantum Yang-Baxter equation with  structure group $G(X,r)$.  Let $m \geq 2$ be the class of the solution $(X,r)$.  
 	Let $\mathcal{S}=\operatorname{Div}(\Delta^{m-1})$ and $\theta_1,...,\theta_n$ be  the  $n$  frozen elements of length $m$.  Let $\psi$ and $\hat{\psi}$ be defined as above. Then the underlying vector space of 
$\operatorname{Im}(\hat{\psi})$ is  finite-dimensional  and it has  a basis  a subset of  the set $\{\psi(s)\mid s \in \mathcal{S}\}  \cup \{\psi(\theta_i)\mid 1 \leq i \leq n\}$.
 \end{thm_A}

 The paper is organized as follows. In Section $1$, we give some preliminaries on set-theoretic solutions of the Yang-Baxter equation, on braces and on  Garside groups. In Section $2$,  we describe the specific Garside structure of structure groups of  non-degenerate  involutive set-theoretic solutions.  In Section $3$,    Theorem  \ref{theo}  is reformulated more precisely and we prove it.

\section{Preliminaries}
\subsection{Definition and properties of set-theoretic solutions of the QYBE}
\label{subsec_qybe_Backgd}We  refer to \cite{etingof}, \cite{gateva_van,gateva_new},  \cite{jespers_book}.\\
Let $X$ be a non-empty set. Let $r: X \times X \rightarrow X \times X$  be a map and write $r(x,y)=(\sigma_{x}(y),\gamma_{y}(x))$,  where $\sigma_x, \gamma_x:X\to X$ are functions  for all  $x,y \in X$.   The pair $(X,r)$ is  \emph{braided} if $r^{12}r^{23}r^{12}=r^{23}r^{12}r^{23}$, where the map $r^{ii+1}$ means $r$ acting on the $i$-th and $(i+1)$-th components of $X^3$.  In this case, we  call  $(X,r)$  \emph{a set-theoretic solution of the QYBE}, and whenever $X$ is finite, we  call  $(X,r)$  \emph{a finite set-theoretic solution of the QYBE}.  The pair $(X,r)$ is \emph{non-degenerate} if for every  $x\in X$,  $\sigma_{x}$ and $\gamma_{x}$  are bijective and it   is  \emph{involutive} if $r\circ r = Id_{X^2}$. If $(X,r)$ is a non-degenerate involutive set-theoretic solution, then $r(x,y)$ can be described as  $r(x,y)=(\sigma_{x}(y),\gamma_{y}(x))=(\sigma_{x}(y),\,\sigma^{-1}_{\sigma_{x}(y)}(x))$.  A set-theoretic solution  $(X,r)$ is \emph{square-free},  if  for every $x \in X$, $r(x,x)=(x,x)$. A set-theoretic  solution $(X,r)$ is \emph{trivial} if $\sigma_{x}=\gamma_{x}=Id_X$, for every  $x \in X$. 
 \begin{defn}
 	The \emph{retract} relation $\sim$ on the set $X$ is defined by $x \sim y$ if $\sigma_x=\sigma_y$. There is a natural induced solution $Ret(X,r)=(X/\sim,r)$, called the \emph{the retraction of $(X,r)$}, defined by $r'([x],[y])=([\sigma_{x}(y)],[\gamma_y(x)])$.
 	A non-degenerate involutive set-theoretic solution  $(X,r)$   is called \emph{a multipermutation solution of level $m$} if $m$ is the smallest natural number such that the solution $\mid Ret^m(X,r)\mid=1$, where 
 	$Ret^k(X,r)=Ret(Ret^{k-1}(X,r))$, for $k>1$. If such an $m$ exists, $(X,r)$   is also called \emph{retractable}, otherwise it is called \emph{irretractable}.
 \end{defn}
\begin{defn}
	Let  $(X,r)$   be a set-theoretic solution of the QYBE. The \emph{structure group} of $(X,r)$ is  defined by $G(X,r)=\operatorname{Gp} \langle X\mid\ xy =\sigma_x(y)\gamma_y(x)\ ;\ x,y\in X \rangle$.  
\end{defn}
The  structure group of the trivial solution is $\mathbb{Z}^{X }$.  Two  set-theoretic solutions $(X,r)$ and $(X',r')$ are \emph{isomorphic} if there is a bijection $\alpha:X \rightarrow X'$ such that $(\alpha \times \alpha) \circ r=r'\circ (\alpha \times \alpha)$  \cite{etingof}. If  $(X,r)$ and $(X',r')$ are isomorphic, then $G(X,r) \simeq G(X',r')$, with  $G(X,r)$ and $G(X',r')$ their respective structure groups. 
An important characterisation of  non-degenerate involutive set-theoretic solutions of the QYBE is  presented in the following proposition.
\begin{thm}\cite[p.176-180]{etingof}\label{prop-etingof}
Let  $(X,r)$ be a non-degenerate involutive set-theoretic solution of the QYBE, defined by  $r(x,y)=(\sigma_{x}(y),\gamma_{y}(x))$,   $x,y \in X$,  with structure group $G(X,r)$. Let $\mathbb{Z}^{X}$ denote the free abelian group with basis $\{t_x \mid x \in X\}$, and  $\operatorname{Sym}_X$ denote the symmetric group of $X$. Then 
\begin{enumerate}[(i)]
		\item The map $\phi: G(X,r) \rightarrow \operatorname{Sym}_X$, defined by $x \mapsto \sigma_{x}$,  is a homomorphism of groups.
		\item  The group $\operatorname{Sym}_X$  acts on $\mathbb{Z}^{X}$.
		\item The group $G(X,r) $  acts on $\mathbb{Z}^{X}$:  if  $g \in G$, then $g \bullet t_x=t_{\alpha(x)}$, with $\alpha=\phi(g)$. 
	\item The map 	$\pi:G(X,r)\rightarrow \mathbb{Z}^{X}$ is a bijective $1$-cocycle, where  $\pi(x)=t_x$,  for $x \in X$, and  $\pi(gh) =\pi(g)+g \bullet \pi(h)$,  for $g,h \in G(X,r)$.

 \item There  is a monomorphism of groups $\tilde{\Phi} : G(X,r) \rightarrow \mathbb{Z}^{X} \rtimes \operatorname{Sym}_X$:  $\tilde{\Phi} (x)=(t_x,\sigma_x) $, $\tilde{\Phi} (g)=(\pi(g), \phi(g))$.
 
 \item  The group $G(X,r) $ is isomorphic to  a subgroup of  $\mathbb{Z}^{X} \rtimes \operatorname{Sym}_X$ of the form $H=\{(a,\varphi(a))\mid a \in \mathbb{Z}^{X} \}$, where $\varphi: \mathbb{Z}^{X} \rightarrow \operatorname{Sym}_X$, is defined by $\varphi(a)=\phi(g)$,  whenever  $\pi(g)=a$.
 \end{enumerate}
\end{thm}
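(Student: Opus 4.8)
The plan is to reduce the whole statement to a single spanning claim: that every $\psi(g)$, $g\in G(X,r)$, lies in the $\mathbb R$-linear span of the finite set $\mathcal B=\{\psi(s)\mid s\in\mathcal S\}\cup\{\psi(\theta_i)\mid 1\le i\le n\}$. Once this is shown, finite-dimensionality of $\operatorname{Im}(\hat\psi)$ is immediate (it is spanned by the finite set $\mathcal B$, and in any case it is a subspace of $M_{n+1}(\mathbb R)$), and a basis of the required form is obtained by extracting a maximal linearly independent subset of $\mathcal B$. By the Bieberbach structure of $G(X,r)$ together with Theorem \ref{prop-etingof}, I would write each group matrix in block form
\[
\psi(g)=\begin{pmatrix} P_{\phi(g)} & \pi(g)\\ 0 & 1\end{pmatrix},
\]
where $P_{\phi(g)}$ is the permutation matrix of $\phi(g)\in\operatorname{Sym}_X$ and $\pi(g)\in\mathbb Z^{X}$ is the value of the bijective $1$-cocycle, i.e. the translation part.

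First I would record the two structural facts about the frozen elements that drive everything. Because $N=\langle\theta_1,\dots,\theta_n\rangle$ is a normal abelian subgroup of the Bieberbach group $G(X,r)$, it is contained in the translation lattice $\ker\phi$ (the point group acts faithfully on the lattice, so any normal abelian subgroup is forced into $\ker\phi$); hence the $\theta_i$ are pure translations,
\[
\psi(\theta_i)=\begin{pmatrix} I & c_i\\ 0 & 1\end{pmatrix},\qquad c_i=\pi(\theta_i)\in\mathbb Z^{X}.
\]
Setting $N_i=\psi(\theta_i)-I$, the matrix $N_i$ has only its last column nonzero, so $N_iN_j=0$ for all $i,j$. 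Moreover, since $N\subseteq\ker\phi$ acts trivially on $\mathbb Z^{X}$, the cocycle identity $\pi(gh)=\pi(g)+g\bullet\pi(h)$ shows $\pi|_{N}$ is an injective homomorphism; as $N\cong\mathbb Z^{n}$ has rank $n$, its image $\{c_1,\dots,c_n\}$ is a rank-$n$ sublattice of $\mathbb Z^{X}\cong\mathbb Z^{n}$, hence an $\mathbb R$-basis of $\mathbb R^{n}$.

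With these in hand the computation is short. Using that $\mathcal S=\operatorname{Div}(\Delta^{m-1})$ is a transversal for $N$ (it is in bijection with $W=G(X,r)/N$), I decompose an arbitrary $g=s\eta$ with $s\in\mathcal S$ and $\eta=\theta_1^{a_1}\cdots\theta_n^{a_n}\in N$. From $N_i^2=0$ and $N_iN_j=0$ one gets $\psi(\eta)=\prod_i(I+a_iN_i)=I+\sum_i a_iN_i$, so that
\[
\psi(g)=\psi(s)\,\psi(\eta)=\psi(s)+\begin{pmatrix}0 & P_{\phi(s)}\bigl(\sum_i a_ic_i\bigr)\\ 0 & 0\end{pmatrix}.
\]
Since $P_{\phi(s)}\bigl(\sum_i a_ic_i\bigr)$ is a vector of $\mathbb R^{n}=\operatorname{span}\{c_1,\dots,c_n\}$, I expand it as $\sum_j\mu_jc_j$ and rewrite the correction term as $\sum_j\mu_jN_j=\sum_j\mu_j\bigl(\psi(\theta_j)-\psi(1)\bigr)$. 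As $1\in\operatorname{Div}(\Delta^{m-1})=\mathcal S$ gives $\psi(1)=I\in\{\psi(s)\mid s\in\mathcal S\}$, this exhibits $\psi(g)$ as an $\mathbb R$-linear combination of elements of $\mathcal B$, which proves the spanning claim and hence the theorem.

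The main obstacle is not the linear algebra but the two structural inputs of the second step: justifying that the frozen elements $\theta_i$ lie in $\ker\phi$ (so that $\psi(\theta_i)$ is a genuine translation matrix with $N_i^2=0$), and that their translation vectors $c_i$ span all of $\mathbb R^{n}$ rather than a proper subspace. Both should follow from Dehornoy's description of $N$ and the frozen elements of length $m$ together with the faithfulness of the point-group action and the bijectivity of $\pi$; once the $c_i$ span $\mathbb R^{n}$, the permutation factor $P_{\phi(s)}$ appearing on the left causes no difficulty, since it preserves that span.
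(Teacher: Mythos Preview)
Your proposal does not address the stated theorem. The statement you were asked to prove is the preliminary result quoted from Etingof--Schedler--Soloviev (the homomorphism $\phi$, the bijective $1$-cocycle $\pi$, and the embedding $\tilde\Phi:G(X,r)\hookrightarrow\mathbb Z^{X}\rtimes\operatorname{Sym}_X$); the paper does not prove this---it is cited as background and carries no proof in the text. What you have actually written is an argument for the paper's \emph{main} theorem (Theorem~A), namely that $\operatorname{Im}(\hat\psi)$ is finite-dimensional with a basis extractable from $\{\psi(s)\mid s\in\mathcal S\}\cup\{\psi(\theta_i)\mid 1\le i\le n\}$.

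Read as a proof of Theorem~A, your argument is correct and follows essentially the same route as the paper's Lemmas~3.1--3.2, with two cosmetic differences. First, the paper factorises $g=g_{_N}s$ with $g_{_N}\in N$ on the \emph{left}; since $\phi(g_{_N})=\mathrm{Id}_X$, the product $\psi(g_{_N})\psi(s)$ has translation part $\pi(s)+\pi(g_{_N})$ with no permutation matrix in front, so the step where you invoke that $P_{\phi(s)}$ preserves $\operatorname{span}\{c_i\}$ never arises. Second, the paper uses the explicit identity $\pi(\theta_k)=mt_k$, giving $c_k=me_k$ and $\mathcal E_k=\tfrac1m(\psi(\theta_k)-\psi(1))$ immediately, whereas you argue more abstractly that the $c_i$ span $\mathbb R^{n}$ via the rank of $N$ and injectivity of $\pi|_N$. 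Your justification that $N\subseteq\ker\phi$ through general Bieberbach-group facts (maximal normal abelian subgroup equals the translation lattice) is valid but heavier than necessary: in the paper this is a one-line consequence of the definition of the class $m$, since $\phi(\theta_x)=\sigma_x\sigma_{D(x)}\cdots\sigma_{D^{m-1}(x)}=\mathrm{Id}_X$ by construction.
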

The subgroup of $\operatorname{Sym}_X$  generated by $\{\sigma_x\mid x\in X\}$ is denoted by $\mathcal{G}(X,r)$ and is called \emph{a IYB group} \cite{cedo}.

\begin{lem}\cite{etingof}\label{lem_etingof_inverseT}
Let $D: X \rightarrow X$ be the map defined by $D(x) =\sigma^{-1}_x(x)$. Then the  map $D$ is invertible and  $D^{-1}(y)= \gamma^{-1}_y(y)$,  with $x,y \in X$ such that   $r(x,y)=(x,y)$.
Inductively, $D^{m}(x)=\sigma^{-1}_{D^{m-1}(x)}D^{m-1}(x)=\sigma^{-1}_{D^{m-1}(x)}\sigma^{-1}_{D^{m-2}(x)}..\sigma^{-1}_{D(x)}\sigma^{-1}_{x}(x)$.
\end{lem}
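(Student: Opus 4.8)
The plan is to exhibit an explicit two-sided inverse for $D$, using the fixed points of $r$ as the organizing device. Set $E(y)=\gamma_y^{-1}(y)$, which is well-defined because each $\gamma_y$ is a bijection by non-degeneracy; the goal is to prove $D\circ E=E\circ D=\operatorname{Id}_X$, which simultaneously yields that $D$ is invertible and that $D^{-1}=E$. The guiding observation is that a pair $(x,y)$ satisfies $r(x,y)=(x,y)$ precisely when $\sigma_x(y)=x$ and $\gamma_y(x)=y$; the first equation is equivalent to $y=D(x)$ and the second to $x=E(y)$, so the assertion $D^{-1}(y)=\gamma_y^{-1}(y)$ is exactly the statement that these two conditions cut out the same set of pairs, and the hypothesis ``$x,y$ such that $r(x,y)=(x,y)$'' in the statement records this fixed-point correspondence.

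First I would record the two identities coming from the description $r(x,y)=(\sigma_x(y),\gamma_y(x))=(\sigma_x(y),\sigma_{\sigma_x(y)}^{-1}(x))$ together with involutivity. The relation $\gamma_y(x)=\sigma_{\sigma_x(y)}^{-1}(x)$ is already stated in the preliminaries. Applying $r$ a second time to $r(x,y)=(\sigma_x(y),\gamma_y(x))$ and using $r\circ r=\operatorname{Id}_{X^2}$ gives $\gamma_{\gamma_y(x)}(\sigma_x(y))=y$, that is, the dual relation $\sigma_x(y)=\gamma_{\gamma_y(x)}^{-1}(y)$. These two identities are the only structural input the argument requires.

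Next I would verify the two composites. For $E\circ D=\operatorname{Id}_X$, fix $x$ and put $y=D(x)=\sigma_x^{-1}(x)$, so that $\sigma_x(y)=x$; substituting into $\gamma_y(x)=\sigma_{\sigma_x(y)}^{-1}(x)$ gives $\gamma_y(x)=\sigma_x^{-1}(x)=y$, whence $x=\gamma_y^{-1}(y)=E(y)=E(D(x))$. Symmetrically, for $D\circ E=\operatorname{Id}_X$, fix $y$ and put $x=E(y)=\gamma_y^{-1}(y)$, so that $\gamma_y(x)=y$; substituting into the dual relation $\sigma_x(y)=\gamma_{\gamma_y(x)}^{-1}(y)$ gives $\sigma_x(y)=\gamma_y^{-1}(y)=x$, whence $y=\sigma_x^{-1}(x)=D(x)=D(E(y))$. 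This establishes both composites and proves $D^{-1}=E$.

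Finally, the inductive formula is a direct unfolding: by definition $D^{m}(x)=D(D^{m-1}(x))=\sigma_{D^{m-1}(x)}^{-1}(D^{m-1}(x))$, and iterating the substitution $D^{k}(x)=\sigma_{D^{k-1}(x)}^{-1}(D^{k-1}(x))$ down to $k=1$ produces the stated composition $\sigma_{D^{m-1}(x)}^{-1}\sigma_{D^{m-2}(x)}^{-1}\cdots\sigma_{D(x)}^{-1}\sigma_x^{-1}(x)$, which a one-line induction on $m$ makes rigorous. The one genuinely nontrivial point, and the main obstacle, is the use of the dual relation in the direction $D\circ E=\operatorname{Id}_X$: since $X$ is not assumed finite, injectivity of $D$ alone would not force surjectivity, so one must actually produce $E$ and check this composite. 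This is precisely the step where \emph{involutivity} enters essentially, rather than merely non-degeneracy.
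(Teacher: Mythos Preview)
Your argument is correct. The paper itself does not supply a proof of this lemma: it is stated with the citation \cite{etingof} and used as a black box, so there is no in-paper proof to compare against. Your approach---exhibiting $E(y)=\gamma_y^{-1}(y)$ and verifying both composites via the identity $\gamma_y(x)=\sigma_{\sigma_x(y)}^{-1}(x)$ and its involutive dual $\sigma_x(y)=\gamma_{\gamma_y(x)}^{-1}(y)$---is clean and complete, and the inductive unfolding is immediate.

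One minor contextual remark: the paper is concerned with finite $X$ (it takes $|X|=n$ throughout), so in that setting injectivity of $D$ would already force bijectivity and one composite would suffice. Your insistence on checking $D\circ E=\operatorname{Id}_X$ separately, and your remark that this is where involutivity genuinely enters, is nonetheless the right instinct: it makes the argument work for arbitrary $X$, which is the generality in which the lemma is stated in \cite{etingof}.
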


\begin{ex} \label{exemple:exesolu_et_gars} Let $X = \{x_1,x_2,x_3,x_4\}$, and  $r: X\times X\to X\times X$ be defined by $r(x_i,x_j)=(x_{\sigma_{i}(j)},x_{\gamma_{j}(i)})$ where $\sigma_i$ and $\gamma_j$ are permutations on $\{1,2,3,4\}$: $\sigma_{1}=(3,4)$, $\sigma_2=(1,4,2,3)$, $\sigma_{3}=(2,1)$, $\sigma_{4}=(3,2,4,1)$; $\gamma_{1}=(2,3)$, $\gamma_2=(2,1,3,4)$, $\gamma_{3}=(4,1)$, $\gamma_{4}=(4,3,1,2)$.
	Then, $G=\operatorname{Gp} \langle X\mid  x_{1}x_{2}=x_{2}x_{3}\,;\, x_{1}x_{3}=x_{4}x_{4}\,;\,
	x_{2}x_{1}=x_{4}x_{3}\,;\, x_{2}x_{2}=x_{3}x_{1}\,;\,
	x_{1}x_{4}=x_{3}x_{2}\,;\,x_{3}x_{4}=x_{4}x_{1} \rangle$ is the structure of $(X,r)$, a non-degenerate involutive irretractable set-theoretic solution.	The map $D$ satisfies $D(x_1)=x_1,\  D(x_2)=x_4,\, D(x_3)=x_3,\ D(x_4)=x_2$.
\end{ex}    

\subsection{The Yang-Baxter equation and Braces}

In \cite{rump_braces}, Rump introduced braces as a generalization of radical rings related with non-degenerate 
involutive set-theoretic solutions of the QYBE. In subsequent papers, he developed the theory of this new algebraic structure. In \cite{brace}, the authors give another equivalent definition of a brace and study its structure.  
\begin{defn}\cite{brace}
	A \emph{left brace}  is a set  $G$ with two operations, $+$ and $\cdot$, such that $(G+)$ is an abelian group, $(G,\cdot)$ is a group and for every $a,b,c \in G$:
\begin{equation}\label{eqn-brace}
	a \cdot (b+c) = a \cdot b+a \cdot c -a
\end{equation}
	The groups  $(G,+)$  and $(G,\cdot)$ are called \emph{the additive group} and \emph{the multiplicative group of the brace},  respectively.\\
	A right brace is defined similarly, by replacing Equation \ref{eqn-brace} by 
	\begin{equation}\label{eqn-brace-right}
(a+b) \cdot c +c = a \cdot c+b \cdot c
	\end{equation} 
\end{defn}
A \emph{two-sided brace} is a left and right brace, that is both Equations \ref{eqn-brace} and  \ref{eqn-brace-right} are satisfied. From the definition of a left brace $G$, it  follows  that the  identity of the multiplicative group of $G$ is equal to the identity  of the additive group of $G$. Additionnally,  for every $a,b,c \in G$, $a\cdot (b-c)=a\cdot b-a \cdot c +a$.
\begin{lem}\cite{brace}
	Let $G$ be a left brace.  Let $\lambda: G\rightarrow \operatorname{Aut}(G,+)$  be the map defined by $\lambda(a)=\lambda_a$  such that $\lambda_a(b)=a \cdot b -a$,  $a,b \in G$. Then  $\lambda$ satisfies the following properties:
	\begin{enumerate}[(i)]
		\item   $\lambda_{a \cdot b}= \lambda_a\lambda_b$, that is  $\lambda$ is a homomorphism of  (multiplicative) groups.
		\item $a \cdot \lambda_a^{-1}(b)=b\cdot \lambda_b^{-1}(a)$.			
		
		\item $\lambda_a\lambda_{\lambda_a^{-1}(b)}=\lambda_b\lambda_{\lambda_b^{-1}(a)}$.			
	\end{enumerate}
\end{lem}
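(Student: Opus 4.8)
The plan is to deduce all three identities directly from the left-brace axiom \eqref{eqn-brace} together with the two facts recorded just above the lemma, namely that the multiplicative identity $1$ coincides with the additive identity $0$ and that $a\cdot(b-c)=a\cdot b-a\cdot c+a$ for all $a,b,c\in G$. First I would confirm that each $\lambda_a$ is an additive endomorphism, so that the inverses $\lambda_a^{-1}$ occurring in (ii) and (iii) are meaningful: applying \eqref{eqn-brace} gives $\lambda_a(b+c)=a\cdot(b+c)-a=(a\cdot b+a\cdot c-a)-a=\lambda_a(b)+\lambda_a(c)$.

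For (i), I would evaluate both sides on an arbitrary $c\in G$. Expanding the right-hand side, $\lambda_a\lambda_b(c)=\lambda_a(b\cdot c-b)=a\cdot(b\cdot c-b)-a$, and the subtraction rule turns this into $a\cdot(b\cdot c)-a\cdot b+a-a=(a\cdot b)\cdot c-a\cdot b$ by associativity of the multiplication. This is exactly $\lambda_{a\cdot b}(c)$, so $\lambda_{a\cdot b}=\lambda_a\lambda_b$. Specializing $b=a^{-1}$ and noting that $\lambda_1=\mathrm{id}$ (since $\lambda_1(c)=1\cdot c-1=c-0=c$), one gets $\lambda_a\lambda_{a^{-1}}=\lambda_{a^{-1}}\lambda_a=\mathrm{id}$; hence each $\lambda_a$ is invertible with $\lambda_a^{-1}=\lambda_{a^{-1}}$, which both places $\lambda_a$ in $\operatorname{Aut}(G,+)$ and shows that $\lambda$ is a homomorphism of multiplicative groups.

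Identity (ii) then follows at once: from $\lambda_a(\lambda_a^{-1}(b))=b$ and the definition of $\lambda_a$ we read off $a\cdot\lambda_a^{-1}(b)-a=b$, i.e. $a\cdot\lambda_a^{-1}(b)=a+b$; the symmetric computation gives $b\cdot\lambda_b^{-1}(a)=b+a$, and the two agree because $(G,+)$ is abelian. Finally, (iii) is a purely formal consequence of the previous two: by (i) the left-hand side equals $\lambda_{a\cdot\lambda_a^{-1}(b)}$, by (ii) the subscript equals $b\cdot\lambda_b^{-1}(a)$, and applying (i) once more rewrites $\lambda_{b\cdot\lambda_b^{-1}(a)}$ as $\lambda_b\lambda_{\lambda_b^{-1}(a)}$. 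I do not expect any genuine obstacle here; the only point requiring care is the logical order, since the expressions $\lambda_a^{-1}$ in (ii) and (iii) are justified only after (i) has delivered the invertibility of $\lambda_a$, and (iii) should be presented strictly as a corollary of (i) and (ii) rather than re-expanded by hand.
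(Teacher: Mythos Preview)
Your argument is correct in all three parts: the additivity check, the computation for (i) using the subtraction rule, the observation that $\lambda_a^{-1}=\lambda_{a^{-1}}$, the derivation $a\cdot\lambda_a^{-1}(b)=a+b$ for (ii), and the formal chain (i)$\Rightarrow$(ii)$\Rightarrow$(iii) are all sound. Note, however, that the paper does not supply its own proof of this lemma; it is quoted verbatim from \cite{brace} and left unproved, so there is nothing to compare your approach against. Your write-up would serve perfectly well as the missing justification.
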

\begin{defn}
	Let $G$  be a left brace. A non-empty subset $I \subseteq G$ is an  \emph{ideal of $G$}  if  $I$  is a normal subgroup of  $(G,\cdot)$   that satisfies:  for all $a\in G$, $b \in I$, $\lambda_a(b)\in I$.
\end{defn}
The following two theorems describe the correspondence between left braces and non-degenerate involutive set-theoretic solutions of the YBE.

\begin{thm}\cite{brace}, \cite{rump_braces}
	Let $G$ be a left brace. Then 
	\begin{enumerate}[(i)]
		\item  the map $r: G\times G \rightarrow G \times G$ defined by $r(x,y)=(\lambda_x(y), \,\lambda^{-1}_{\lambda_x(y)}(x))$ is a  non-degenerate involutive set-theoretic solution of the Yang-Baxter equation,  called the solution associated to the left brace $G$,  and denoted by $(G,r)$.
		\item  there exists a  non-degenerate involutive set-theoretic solution $(X,r')$ such that $Ret(X,r') \simeq (G,r)$ and moreover $\mathcal{G}(X,r')$ is isomorphic to the multiplicative group of the left brace $G$, where $\mathcal{G}(X,r')$ is the IYB group corresponding to tha solution $(X,r')$.
	\end{enumerate}
\end{thm}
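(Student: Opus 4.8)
I would prove the two parts separately, treating part (i) as a computation governed by the three identities of the $\lambda$-lemma and part (ii) as the substantive construction. For part (i), non-degeneracy and involutivity come first. Since each $\lambda_a$ lies in $\operatorname{Aut}(G,+)$, the map $\sigma_x=\lambda_x$ is a bijection, so $r$ is left non-degenerate. Involutivity is a direct check: writing $u=\lambda_x(y)$ and $v=\lambda_u^{-1}(x)$, one has $\lambda_u(v)=x$ and $\lambda_{\lambda_u(v)}^{-1}(u)=\lambda_x^{-1}(u)=y$, so $r(r(x,y))=(x,y)$, using only that the $\lambda_a$ are invertible. For right non-degeneracy I would first record the relation $\lambda_x(y)\cdot\gamma_y(x)=x\cdot y$, that is $\sigma_x(y)\cdot\gamma_y(x)=xy$: setting $a=x$ and $b=\lambda_x(y)$ in property (ii), $a\cdot\lambda_a^{-1}(b)=b\cdot\lambda_b^{-1}(a)$, yields exactly this. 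From this relation together with involutivity one deduces that each $\gamma_y$ is a bijection.

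The heart of part (i) is the braid relation. I would use the standard fact that, for an involutive left-non-degenerate map of the form $r(x,y)=(\sigma_x(y),\sigma_{\sigma_x(y)}^{-1}(x))$, the braid relation $r^{12}r^{23}r^{12}=r^{23}r^{12}r^{23}$ is equivalent to the single identity $\sigma_{\sigma_x(y)}\,\sigma_{\gamma_y(x)}=\sigma_x\,\sigma_y$; rewriting it with $b=\sigma_x(y)$ this reads $\sigma_b\,\sigma_{\sigma_b^{-1}(x)}=\sigma_x\,\sigma_{\sigma_x^{-1}(b)}$, which for $\sigma=\lambda$ is precisely property (iii). Thus it suffices to verify $\lambda_{\lambda_x(y)}\,\lambda_{\gamma_y(x)}=\lambda_x\lambda_y$: by property (i) the left side equals $\lambda_{\lambda_x(y)\cdot\gamma_y(x)}$, which by the relation above is $\lambda_{x\cdot y}=\lambda_x\lambda_y$, again by property (i). If one prefers not to invoke the reduction, the same identities (i)--(iii) can instead be fed into a direct expansion of both sides of the braid relation on a triple $(x,y,z)$, the first components being controlled by (i) and (iii) and the remaining components by (ii). This establishes (i).

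For part (ii) the naive candidate $X=G$, $r'=r_G$ does not work: its IYB group is $\mathcal{G}(G,r_G)=\operatorname{Im}(\lambda)\cong(G,\cdot)/\ker\lambda$, in general a proper quotient of the multiplicative group, the defect being measured by the socle $\operatorname{Soc}(G)=\ker\lambda$. The plan is therefore to enlarge the carrier set so that the full multiplicative group acts faithfully while the retraction collapses this action back to the $\lambda$-action. Concretely I would take $X=G\times\operatorname{Soc}(G)$ and build permutations $\sigma'_{(a,s)}$ that depend only on $a$ and project to $\lambda_a$ on the first factor, with a correction term on the $\operatorname{Soc}(G)$-factor, a suitable cocycle splitting the extension $1\to\operatorname{Soc}(G)\to(G,\cdot)\to\operatorname{Im}(\lambda)\to 1$, chosen so that $a\mapsto\sigma'_a$ is an injective homomorphism from $(G,\cdot)$. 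One then sets $r'(p,q)=(\sigma'_p(q),\,\sigma'^{-1}_{\sigma'_p(q)}(p))$; involutivity and the braid relation for $r'$ are checked exactly as in part (i), again reducing to the analogue of property (iii) for the lifted $\sigma'$. By construction $\mathcal{G}(X,r')=\langle\sigma'_p\rangle=\operatorname{Im}(a\mapsto\sigma'_a)\cong(G,\cdot)$, and since $\sigma'_{(a,s)}=\sigma'_{(a',s')}$ holds exactly when $a=a'$, the retraction classes are indexed by $a\in G$ with induced maps $\overline{\sigma'_a}=\lambda_a$, so $Ret(X,r')\simeq(G,r)$.

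The main obstacle is precisely this construction in part (ii): choosing the carrier set and the correction cocycle so that the resulting $r'$ is simultaneously a genuine involutive non-degenerate solution, has IYB group equal to the \emph{full} multiplicative group $(G,\cdot)$ rather than a quotient, and retracts \emph{exactly} onto $(G,r)$. By contrast, once the reduction to property (iii) is in place, part (i) is essentially a two-line consequence of the $\lambda$-identities.
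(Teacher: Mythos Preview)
The paper does not prove this theorem: it is quoted from \cite{brace} and \cite{rump_braces} as background material in the preliminaries, with no proof given. There is therefore nothing in the paper to compare your argument against.

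On the substance of your attempt: part (i) is fine and is essentially the standard argument, reducing the braid relation to the identity $\lambda_a\lambda_{\lambda_a^{-1}(b)}=\lambda_b\lambda_{\lambda_b^{-1}(a)}$ via the multiplicativity $\lambda_{ab}=\lambda_a\lambda_b$ and the relation $\sigma_x(y)\cdot\gamma_y(x)=xy$. Part (ii), however, is only a sketch and contains a real gap. You propose $X=G\times\operatorname{Soc}(G)$ with $\sigma'_{(a,s)}$ built from $\lambda_a$ together with ``a suitable cocycle splitting the extension $1\to\operatorname{Soc}(G)\to(G,\cdot)\to\operatorname{Im}(\lambda)\to1$''. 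But that extension need not split, so you cannot simply invoke a splitting; and even setting aside the word ``splitting'', you have not specified the correction term on the $\operatorname{Soc}(G)$-factor, nor verified that the resulting $\sigma'$ satisfy the analogue of property (iii), nor checked that the retraction classes are \emph{exactly} the fibres of the first projection (equality of the $\sigma'_{(a,s)}$ must coincide with equality of $a$, in both directions). The actual construction in \cite{brace} is concrete and not merely an existence argument by cocycle; if you want to carry this through you need to write down the map explicitly and check the three conditions directly. As it stands, part (ii) identifies the correct obstruction (the socle) and the correct target shape for $X$, but the verification is deferred to an unspecified ``suitable'' choice, which is where all the content lies.
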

There is also a converse:
\begin{thm}\cite{brace}
	Let $(X,s)$ be a  non-degenerate involutive set-theoretic solution. Then $G(X,s)$  is isomorphic to the multiplicative group of  a left brace $H$, and   $(X,s) \simeq(Y,r\mid_{Y^2})$,
	where $(H,r)$  is the solution associated to the left brace $H$and $Y\subseteq H$. 
\end{thm}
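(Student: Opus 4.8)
The plan is to realize the structure group itself as the multiplicative group of a left brace, using the bijective $1$-cocycle of Theorem \ref{prop-etingof}. Concretely, I would take $H$ to be the underlying set of $G(X,s)$ equipped with its given group law as the multiplicative operation $\cdot$, and I would define the additive operation by transporting the abelian group structure of $\mathbb{Z}^{X}$ along the bijection $\pi\colon G(X,s)\to \mathbb{Z}^{X}$ of Theorem \ref{prop-etingof}(iv): that is, $g+h:=\pi^{-1}(\pi(g)+\pi(h))$. By construction $(H,+)$ is abelian, indeed $\pi$ becomes an isomorphism $(H,+)\to\mathbb{Z}^{X}$, and $(H,\cdot)=G(X,s)$; so once $H$ is shown to be a left brace the first assertion is immediate, the isomorphism onto the multiplicative group being the identity.

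The key step is to verify the brace identity $a\cdot(b+c)=a\cdot b+a\cdot c-a$. I would first identify the map $\lambda$ of this prospective brace with the cocycle action $\bullet$ of Theorem \ref{prop-etingof}(iii). Using the cocycle rule $\pi(ab)=\pi(a)+a\bullet\pi(b)$ and the fact that $\pi$ intertwines $+$ on $H$ with $+$ on $\mathbb{Z}^{X}$, one computes
\begin{equation*}
\pi(\lambda_a(b))=\pi(a\cdot b)-\pi(a)=\bigl(\pi(a)+a\bullet\pi(b)\bigr)-\pi(a)=a\bullet\pi(b).
\end{equation*}
Since $g\bullet$ permutes the basis $\{t_x\}$ of $\mathbb{Z}^{X}$ and hence is additive, each $\lambda_a$ is an automorphism of $(H,+)$. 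Applying $\pi$ to both sides of the desired identity and using additivity of $a\bullet$ reduces the brace axiom to the manifestly true equality $\pi(a)+a\bullet(\pi(b)+\pi(c))=\pi(a)+a\bullet\pi(b)+a\bullet\pi(c)$. This establishes that $H$ is a left brace.

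For the second assertion I would set $Y$ to be the image of $X$ under the canonical map $X\to G(X,s)=H$. This map is injective because $\pi$ sends the generator $x$ to the basis vector $t_x$, and distinct generators go to distinct basis vectors; thus $x\mapsto\pi(x)=t_x$ identifies $X$ bijectively with $Y$. I would then check that the solution $(H,r)$ associated to the brace $H$ restricts on $Y$ to $s$. On generators the previous computation gives $\pi(\lambda_x(y))=x\bullet t_y=t_{\sigma_x(y)}=\pi(\sigma_x(y))$, so $\lambda_x(y)=\sigma_x(y)\in Y$; and since $\lambda$ agrees with $\sigma$ on $Y$, we get $\lambda^{-1}_{\lambda_x(y)}(x)=\sigma^{-1}_{\sigma_x(y)}(x)=\gamma_y(x)\in Y$, using the involutive description $\gamma_y(x)=\sigma^{-1}_{\sigma_x(y)}(x)$. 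Hence $r(x,y)=(\sigma_x(y),\gamma_y(x))=s(x,y)$, so $r$ preserves $Y^{2}$ and the bijection $x\mapsto x$ is an isomorphism of solutions $(X,s)\xrightarrow{\sim}(Y,r\mid_{Y^2})$.

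The main obstacle is conceptual rather than computational: one must recognize that the entire brace structure is encoded by the single bijective $1$-cocycle, so that the additive law, the automorphy of each $\lambda_a$, and the brace identity all follow from the cocycle rule together with the linearity of the $G(X,s)$-action on $\mathbb{Z}^{X}$. Once this is in place the verifications above are routine; the only points requiring care are the well-definedness of $+$, guaranteed by bijectivity of $\pi$, and the injectivity of $X\to H$, guaranteed by $\pi$ sending generators to distinct basis vectors.
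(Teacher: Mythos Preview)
The paper does not supply its own proof of this theorem: it is quoted verbatim from \cite{brace} and left unproved, so there is no in-paper argument to compare against. That said, your proposal is correct and is precisely the standard construction; indeed, the Remark immediately following this theorem in the paper sketches exactly your addition law, phrased on the image $\tilde{\Phi}(G(X,s))\subseteq\mathbb{Z}^{X}\rtimes\operatorname{Sym}_X$ as $(a,\varphi(a))\oplus(b,\varphi(b))=(a+b,\varphi(a+b))$, which is your $g+h:=\pi^{-1}(\pi(g)+\pi(h))$ transported along $\tilde{\Phi}$.

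One small point worth making explicit in your write-up: when you pass from $\lambda_x(y)=\sigma_x(y)$ on $Y$ to $\lambda^{-1}_{\sigma_x(y)}(x)=\sigma^{-1}_{\sigma_x(y)}(x)$, you are using that each $\lambda_a$ is the $\mathbb{Z}$-linear extension of a permutation of the basis $Y$, so its inverse is the linear extension of the inverse permutation and in particular preserves $Y$. You have all the ingredients for this (you noted that $g\bullet$ permutes the basis), but stating it prevents any doubt that $\lambda^{-1}_z$ restricted to $Y$ really is $\sigma_z^{-1}$ rather than merely some automorphism of $(H,+)$.
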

Roughly, a   group $G$ is the structure group of a solution if and only if it is isomorphic to the multiplicative 
group of a left brace $B$ such that the additive group of $B$ is a free abelian group with basis $X$ such that $\lambda_x(y) \in X$, for all $x,y \in X$ \cite{brace,rump_braces}.

\begin{defn}
	Let $ B_1$  and $B_2$ be two left braces. A map $\varphi: B_1 \rightarrow B_2$ 
	is a homomorphism of left braces if $\varphi(a+b)=\varphi(a)\ +  \varphi(b)$ and $\varphi(a\cdot b)=\varphi(a)\cdot \varphi(b)$, for every $a,b \in B_1$.
	The kernel of $\varphi$ is defined as $\operatorname{Ker}(\varphi)=\{a \in B_1 \mid\varphi(a)=1\}$ and it is an ideal of $B_1$.
\end{defn}

\begin{defn}
	Let $G$ be a left brace. The \emph{socle of $G$} is 
	\[\operatorname{Soc}(G)=\{a \in G \mid \lambda_a=Id\}=\{a \in G \mid a\cdot b=a+b; \; \forall b\in G\}\]
\end{defn}
The socle of $G$, $\operatorname{Soc}(G)$, is an ideal of $G$, since  $\operatorname{Soc}(G)$ is a normal subgroup of the multiplicative group of $G$ and it satisfies  $\lambda_{\lambda_a(b)}=Id$, 
for all $a\in G$, $b \in I$.

\begin{rem}
From Theorem \ref{prop-etingof},  there is a  monomorphism of groups $\tilde{\Phi} : G(X,r) \rightarrow \mathbb{Z}^{X} \rtimes \operatorname{Sym}_X$ and the group $G(X,r) $ is isomorphic to  a subgroup of  $\mathbb{Z}^{X} \rtimes \operatorname{Sym}_X$ of the form $H=\{(a,\varphi(a))\mid a \in \mathbb{Z}^{X} \}$, where $\varphi: \mathbb{Z}^{X} \rightarrow \operatorname{Sym}_X$, is defined by $\varphi(a)=\phi(\pi^{-1}(a))$.
The product in  $H$ is defined by $(a, \varphi(a))(b,\varphi(b))=(a+\varphi(a)(b), \varphi(a)\varphi(b))$. By defining a sum $\oplus$ on $H$  by the rule $(a, \varphi(a))\oplus(b,\varphi(b))=(a+b, \varphi(a+b))$,  the  map $\tilde{\Phi}$ can be extended to a homomorphism of left braces. Furthermore,  the IYB group   $\mathcal{G}(X,r)=\phi(G(X,r))$ has a unique structure of left brace such that $\phi$  is a homomorphism of left braces, with $\operatorname{Ker}(\phi)=\operatorname{Soc}(G(X,r))$. We refer to \cite[p.59]{cedo} for more details.
\end{rem}

\subsection{Preliminaries on Garside monoids and groups} \label{sec_bcgd_gars}
\cite{deh_francais},  \cite{ddkgm}, \cite{DiM}
A monoid~$M$ is \emph{cancellative} if for every~${e,f,g,h}$ in $M$, the equality~$efg = ehg$ implies~$f = h$. The element $f$ is a \emph{right divisor}  ({\it resp.} a \emph{left divisor}) of $h$ if there is an element $z$ in $M$ such that $h = zf$  ({\it resp.} $h=fz$).  The element $h$ is a \emph{least common multiple (lcm) w.r to right-divisibility}  of $f$ and  $g$ in $M$ if $f$ and $g$ are right divisors of $h$ and
additionally  if there is an element $h'$ such that $f$ and $g$ are right divisors of $h'$, then
$h$ is  right divisor of $h'$. 
The element $a \in M$ is the \emph{greatest common divisor (gcd) w.r to right-divisibility} of $f$ and  $g$ in $M$ if $a$ right divides $f$ and $g$ and
additionally  if there is an element $a'$ that right divides $f$ and $g$, then
$a'$ is  right divisor of $a$.  The  lcm and gcd   with respect to left divisibility are defined similarly. A monoid~$M$  is right noetherian ({\it resp.}  \emph{left noetherian}) if every sequence~$(g_n)_{n\in\mathbb{N}}$ of elements of $M$ such that $g_{n+1}$ is a right divisor ({\it resp.} a left divisor) of $g_n$ stabilizes. It is noetherian if it is both left and right noetherian. If~$M$ is cancellative and noetherian, then left and right divisibilities are partial orders on~$M$.

\begin{defn} \begin{enumerate}[(i)]
		\item A \emph{locally Garside monoid} $M$ is a cancellative noetherian monoid such that
		any two elements in $M$ have a lcm for left (and right) divisibility if and only if they have a common multiple for left (and right) divisibility. 
		\item An element~$\Delta$ is \emph{balanced} if it has the same set of right and left divisors, denoted by $\operatorname{Div}(\Delta)$.
		\item  A \emph{Garside element} of  $M$ is a balanced element $\Delta$ with $\operatorname{Div}(\Delta)$ generating  $M$. 
		\item A monoid $M$ is a \emph{Garside monoid} if $M$ is  locally Garside with a Garside element $\Delta$ satisfying $\operatorname{Div}(\Delta)$ is finite. A group is a \emph{Garside group} if it is the group of fractions of  a Garside monoid.
	\end{enumerate}
\end{defn}
A  \emph{quasi-Garside monoid}  $M$ is defined as an extension of a Garside monoid  with  $\operatorname{Div}(\Delta)$  not necessarily finite \cite{quasi-garside}. The seminal examples of Garside groups are the braid groups and finite-type Artin groups  \cite{DePa}.  Torus link groups are also Garside groups \cite{picantin}.

\section{Description of the Garside structure  specific to the  structure group of a set-theoretic solution}  \label{secQYBE_Garside}
Structure  groups of non-degenerate, involutive set-theoretic solutions of the QYBE are Garside groups,  that satisfy  interesting properties \cite{chou_art,chou_godel1,chou_godel2,chou_left_garside}, \cite{ddkgm},  \cite{gateva}. We present some of the Garsideness properties needed here.
In the following, we assume  $(X,r)$ is  a  non-degenerate involutive set-theoretic solution with  structure group $G(X,r)$, $\mid X \mid =n$.
\begin{prop}\cite{chou_art}
 	\begin{enumerate}[(i)]
		\item  The lcm of $X$ for both left and right divisibilities is a Garside element, denoted by $\Delta$.
		\item The Garside element  $\Delta$ has length $\mid X \mid$ and $\mid \operatorname{Div}(\Delta)\mid = 2^n$. 
		\item $s \in \operatorname{Div}(\Delta)$ is the  lcm w.r to left-divisibility of~$X_{\ell} \subseteq X$ and   the  lcm w.r to right-divisibility of $X_r \subseteq X$,  with $\mid X_{\ell} \mid$ and $\mid X_r\mid$ equal to the length of $s$. 
	\end{enumerate}	
\end{prop}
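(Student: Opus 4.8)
The plan is to work in the structure monoid $M=M(X,r)=\operatorname{Mon}\langle X\mid xy=\sigma_x(y)\gamma_y(x)\rangle$ and to transport its divisibility to the free abelian monoid $\mathbb{N}^X$ through the $1$-cocycle of Theorem~\ref{prop-etingof}. All defining relations preserve word length, so $M$ is homogeneous: it is noetherian, its only unit is $1$, and every element has finitely many divisors. Since $M$ embeds in $G(X,r)$ (\cite{etingof}), it is cancellative, and restricting $\pi$ to $M$ gives a map $v\colon M\to\mathbb{Z}^X$. Using $\pi(ab)=\pi(a)+\phi(a)\bullet\pi(b)$ and that $\phi(a)$ acts on $\mathbb{Z}^X$ by permuting the basis $\{t_x\mid x\in X\}$, an induction on length shows $v$ takes values in $\mathbb{N}^X$ and maps $M$ bijectively onto it (surjectivity is the induction; injectivity comes from $\pi$ being injective on $G(X,r)$). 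The first thing I would record is that $v$ turns left-divisibility into the coordinatewise order: $a$ left-divides $b$ iff $v(b)-v(a)=\phi(a)\bullet v(c)$ for some $c\in M$, iff $v(a)\le v(b)$ coordinatewise, the nontrivial implication using surjectivity of $v$. Hence any finite family in $M$ has a least common left-multiple — the $v$-preimage of the coordinatewise supremum — and $\Delta:=\operatorname{lcm}_\ell(X)$ has $v(\Delta)=\max_{x\in X}t_x=\sum_{x\in X}t_x$; so $|\Delta|=n$ and its left-divisors are exactly $v^{-1}(\{0,1\}^X)$, a set of cardinality $2^n$.

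Next I would show $\Delta$ is balanced. Comparing $v$-images, and using that a permutation fixes the all-ones vector, one gets $\Delta\,x=\phi(\Delta)(x)\cdot\Delta$ in $G(X,r)$ for every $x\in X$; thus conjugation by $\Delta$ preserves $X$, so $\Delta M\Delta^{-1}=M$. Then for $a\in M$ one has $a\preceq_\ell\Delta\iff a^{-1}\Delta\in M\iff \Delta a^{-1}=\Delta(a^{-1}\Delta)\Delta^{-1}\in M\iff a\preceq_r\Delta$, so $\operatorname{Div}_\ell(\Delta)=\operatorname{Div}_r(\Delta)=:\operatorname{Div}(\Delta)$. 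Since $X\subseteq\operatorname{Div}(\Delta)$ generates $M$ and $\operatorname{Div}(\Delta)$ is finite, $\Delta$ is a Garside element; together with the right-lcm's supplied below this makes $M$ a Garside monoid and $G(X,r)$ a Garside group.

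To complete~(i), namely $\operatorname{lcm}_\ell(X)=\operatorname{lcm}_r(X)$, I would use the symmetry coming from involutivity: the opposite monoid $M^{\mathrm{op}}$ is the structure monoid of $(X,\bar r)$ with $\bar r=\tau\circ r\circ\tau$, again a non-degenerate involutive solution, so running the previous two paragraphs for $\bar r$ shows $M$ has right-lcm's and that $\bar\Delta:=\operatorname{lcm}_r(X)$ is balanced of length $n$. Being balanced, $\bar\Delta$ is left-divisible by every $x\in X$, so $\Delta=\operatorname{lcm}_\ell(X)\preceq_\ell\bar\Delta$; as both have length $n$ and $M$ is homogeneous, $\bar\Delta=\Delta$. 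For~(iii): if $s\in\operatorname{Div}(\Delta)$ then $v(s)\in\{0,1\}^X$; set $X_\ell:=\{x\in X\mid t_x\le v(s)\}$, so that $|X_\ell|$ is the total degree of $v(s)$, i.e.\ the length of $s$, and $v(\operatorname{lcm}_\ell(X_\ell))=\max_{x\in X_\ell}t_x=v(s)$, hence $s=\operatorname{lcm}_\ell(X_\ell)$. The mirror argument in $M^{\mathrm{op}}$ produces $X_r\subseteq X$ with $s=\operatorname{lcm}_r(X_r)$ and $|X_r|$ the length of $s$ (in general $X_\ell\neq X_r$; they are interchanged by a permutation depending on $s$).

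The two steps I expect to carry the real content, rather than the bookkeeping, are: (a) the bijectivity of $v\colon M\to\mathbb{N}^X$ — equivalently, that $M$ is a monoid of $I$-type — which rests on $M\hookrightarrow G(X,r)$ and on Theorem~\ref{prop-etingof}(iv), and is exactly where non-degeneracy and involutivity of $(X,r)$ are essential; and (b) the identification of $M^{\mathrm{op}}$ with the structure monoid of a non-degenerate involutive solution, which furnishes the right-handed picture. Granting these, the length $n$, the count $2^n$, the balancedness of $\Delta$, and statement~(iii) all follow formally from the identification of left-divisibility with the coordinatewise order on $\mathbb{N}^X$.
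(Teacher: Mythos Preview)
The paper does not prove this proposition; it is quoted from \cite{chou_art} and stated without argument, so there is no in-paper proof to compare against. Your write-up is a correct and self-contained derivation of all three items.

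Your route---pulling divisibility in $M(X,r)$ back to the coordinatewise order on $\mathbb{N}^X$ via the bijective $1$-cocycle---is the $I$-type viewpoint of \cite{gateva_van,etingof}, and it is somewhat different in flavour from the original proof in \cite{chou_art}, which proceeds by verifying the cube/completeness conditions on the quadratic presentation and then invoking the Dehornoy--Paris criteria for Garsideness. What your approach buys is conceptual transparency: once $v\colon M\to\mathbb{N}^X$ is known to be an order-isomorphism for left-divisibility, the length of $\Delta$, the count $2^n$, and the description of simples as lcm's of subsets of $X$ are read off from the boolean lattice $\{0,1\}^X$. The balancedness argument via $\Delta x=\phi(\Delta)(x)\,\Delta$ and the passage to $M^{\mathrm{op}}\cong M(X,\tau r\tau)$ for the right-handed statements are clean and correct. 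The one place to be careful is the logical order around point~(a): the injectivity of $v$ (equivalently, $M\hookrightarrow G(X,r)$) is not literally stated in \cite{etingof} for the monoid, so it is better to cite \cite{gateva_van} (the $I$-type result) or argue injectivity of $M\to\mathbb{N}^X\rtimes\operatorname{Sym}_X$ directly; you already flag this as the substantive input, which is appropriate.
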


In Example \ref{exemple:exesolu_et_gars}, $\Delta=(x_1x_3)^2=(x_2x_4)^2=...=(x_3x_2)^2$ is the left and right lcm of $X$.  

\begin{defn}\label{defn_class}
	$(i)$ We say that $(X,r)$  \emph{satisfies $(C)$}, if $\sigma_x\sigma_y=Id_X$, whenever $r(x,y)=(x,y)$;  $xy$ and $yx$ are called \emph{frozen elements of length $2$}  \cite{chou_godel2}.\\
	$(ii)$ We say that $(X,r)$ is of \emph{class $m$}, if $m$  is the  minimal  natural number such that \\
	$\sigma_x\,\sigma_{D(x)}\,\sigma_{D^{2}(x)}\,...\,\sigma_{D^{m-1}(x)}\,=\,Id_X$,  for every $x \in X$.
\end{defn}

\begin{rem}\label{remark_defn_class}
	In  Definition \ref{defn_class}$(ii)$, we use the terminology from   \cite{deh_coxeterlike}, with a different (but equivalent) formulation. Being of class $2$ is equivalent to satisfying  $(C)$, with $(C)$ rewritten as  $\sigma_x\sigma_{\sigma_x^{-1}(x)}=Id_X$,  $\forall x \in X$.
\end{rem}

In  \cite{chou_godel2}, we show that if $(X, r)$ is  a  non-degenerate involutive set-theoretic solution, with $\mid X \mid =n$, that satisfies $(C)$,  then there is a short exact sequence $1 \rightarrow N \rightarrow G  \rightarrow W  \rightarrow 1$, where $N$ is a normal free abelian subgroup of rank $n$ and $W$ is a finite group of order $2^n$. Moreover, $W$  is  a Coxeter-like group, that is $W$ is  a finite quotient group with its elements in bijection with   $\operatorname{Div(\Delta^{})}$ \cite{chou_godel2}. In \cite{deh_coxeterlike}, P. Dehornoy  proves that  the condition $C$ may be dropped and  that for each  $(X,r)$,  there  exists  a minimal natural number $m$ such that for each $x \in X$  	$\sigma_x\,\sigma_{D(x)}\,\sigma_{D^{2}(x)}\,...\,\sigma_{D^{m-1}(x)}\,=\,Id_X$. This condition implies that for every  $x \in X$,  there is a chain of trivial relations of the form $xy_1=xy_1,\;y_1y_2=y_1y_2,\;y_2y_3=y_2y_3,...,y_{m-1}x=y_{m-1}x$, $y_i \in X$ and  there exists a (unique) element of length $m$ of the form $xy_1y_2y_3...y_{m-1}$, that   we  call the \emph{frozen element of length $m$ (starting with $x$)} and denote it by $\theta_x$. The subgroup $N$, generated by the $n$ frozen elements of length $m$,    is  normal, free abelian  of rank $n$ and the group $W$ defined by $G(X,S)/N$ is finite of order $m^n$ and is a  Coxeter-like  group:  the elements in $W$ are in bijection with $\operatorname{Div(\Delta^{m-1})}$ \cite{deh_coxeterlike}. The elements in $\operatorname{Div(\Delta^{m-1})}$ are called \emph{the simple elements}, and $\operatorname{Div(\Delta^{m-1})}$ is called a \emph{germ} \cite{deh-digne}.

\begin{ex} 
	In Example \ref{exemple:exesolu_et_gars},  the four trivial relations are  $x^{2}_{1}=x^{2}_{1},\,x^{2}_{3}=x^{2}_{3},\,x_{2}x_{4}=x_{2}x_{4},\,x_{4}x_{2}=x_{4}x_{2}$. The solution satisfies  $(C)$, as $\sigma_1^2=\sigma_3^2=\sigma_3\sigma_4=\sigma_4\sigma_3=Id_X$ and  $N$ is  generated by the four frozen elements of length two: $\theta_1=x_{1}^2,\; \theta_2=x_{2}x_4,\;\theta_3=x_{3}x_{3},\;\theta_4=x_{4}x_{2}$. The coxeter-like quotient group $W$ has order $2^4$.
\end{ex}
 Let $\mathbb{Z}^{n}$ the free abelian group generated  by $X$  and $\pi: G(X,r) \rightarrow \mathbb{Z}^{n}$ be the bijective $1$-cocycle defined in Theorem \ref{prop-etingof}.    
\begin{defn}
Let $g \in G(X,r)$.  We denote by $[g]_i$  the $i$-th coordinate of $\pi(g)$. 
\end{defn} 
Clearly,  for every $1 \leq i \leq n$, $[1]_i=0$, since we always assume $\pi(1)=0$.
In the following Lemmas, we give some computation rules for $\pi$  that are implicit in \cite{etingof} and appear in  \cite{deh_coxeterlike}  with a different notation (also in  \cite{chou_aut}). In the  first lemma, we 
describe the behaviour of $\pi$ on frozen elements.
\begin{lem}\label{lem_pi_frozen}
	Assume $(X,r)$ is of class $m$. Let $x \in X$, with $\pi(x)=t$.  Let $\theta_x$  be  the frozen element of length $m$ starting with $x$.
	\begin{enumerate}[(i)]
		\item  $r(x,D(x))=(x,D(x))$.
		\item  $\theta_x=$  $x\,D(x) \,...D^{m-2}(x)\,D^{m-1}(x)$. 
			\item  $\pi(\theta_x)\,=mt$.
			\item  $\pi(x\,D(x)\,...D^{k-1}(x))= kt$.
				\item  $\pi(\theta_{i_1}^{\alpha_1}...\theta_{i_k}^{\alpha_k})\,=m\,\sum\limits_{j=1}^{j=k}\alpha_{j}t_{i_j}$.
					\item  $\pi(\theta_{1}..\theta_{n})\,=\pi(\Delta^m)= m\,\sum\limits_{i=1}^{i=n}t_i$.
	\end{enumerate}   
\end{lem}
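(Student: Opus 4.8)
The plan is to derive (i)–(ii) directly from the definition of $D$ together with the class-$m$ hypothesis, and to obtain (iii)–(vi) by iterating the $1$-cocycle identity $\pi(gh)=\pi(g)+g\bullet\pi(h)$ of Theorem~\ref{prop-etingof}.

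\emph{Parts (i) and (ii).} For (i), substitute $y=D(x)=\sigma_x^{-1}(x)$ into the closed form $r(x,y)=(\sigma_x(y),\sigma_{\sigma_x(y)}^{-1}(x))$: the first coordinate becomes $\sigma_x(\sigma_x^{-1}(x))=x$ and the second becomes $\sigma_x^{-1}(x)=D(x)$. For (ii), recall $\theta_x$ is the unique word $x\,y_1\cdots y_{m-1}$ all of whose consecutive pairs $(x,y_1),(y_1,y_2),\dots,(y_{m-1},x)$ are trivial relations. Since $r(y,z)=(y,z)$ forces $z=\sigma_y^{-1}(y)=D(y)$ by non-degeneracy, we get $y_1=D(x)$, then $y_2=D(y_1)=D^2(x)$, and inductively $y_k=D^k(x)$, so that $\theta_x=x\,D(x)\cdots D^{m-1}(x)$ — provided the closing pair $(D^{m-1}(x),x)$ is also trivial, i.e. provided $D^m(x)=x$. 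This last point is exactly where the class-$m$ condition of Definition~\ref{defn_class} enters: by Lemma~\ref{lem_etingof_inverseT}, $D^m(x)=\bigl(\sigma_x\sigma_{D(x)}\cdots\sigma_{D^{m-1}(x)}\bigr)^{-1}(x)=x$.

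\emph{Parts (iii) and (iv).} I would prove (iv) for all $k\ge 1$ and obtain (iii) as the case $k=m$ via (ii). Writing $g_j=D^{j-1}(x)$ and iterating the cocycle identity, $\pi(g_1\cdots g_k)=\pi(g_1)+\sum_{j=1}^{k-1}(g_1\cdots g_j)\bullet\pi(g_{j+1})$. Because $\phi(g_1\cdots g_j)=\sigma_x\circ\sigma_{D(x)}\circ\cdots\circ\sigma_{D^{j-1}(x)}$ and $\sigma_{D^{i}(x)}\!\bigl(D^{i+1}(x)\bigr)=D^{i}(x)$ for every $i$ (this is (i) applied at $D^{i}(x)$), a descending telescoping gives $(g_1\cdots g_j)\bullet\pi(g_{j+1})=t_{\,\sigma_x\cdots\sigma_{D^{j-1}(x)}(D^{j}(x))}=t_x=t$; hence $\pi(g_1\cdots g_k)=t+(k-1)t=kt$.

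\emph{Parts (v) and (vi).} The crucial observation is $\phi(\theta_x)=\sigma_x\sigma_{D(x)}\cdots\sigma_{D^{m-1}(x)}=Id_X$, again by the class-$m$ condition; hence each $\theta_i:=\theta_{x_i}$, and every word in the $\theta_i^{\pm1}$, lies in $\operatorname{Ker}\phi$ and acts trivially in the cocycle, so $\pi$ restricts to a group homomorphism on $\langle\theta_1,\dots,\theta_n\rangle$. Thus $\pi(\theta_i^{\alpha})=\alpha\pi(\theta_i)$ and, by (iii), $\pi(\theta_{i_1}^{\alpha_1}\cdots\theta_{i_k}^{\alpha_k})=\sum_j\alpha_j\pi(\theta_{i_j})=m\sum_j\alpha_j t_{i_j}$, which is (v). For (vi), the first equality is (v) with $k=n$ and all $\alpha_j=1$; for the second I would first prove $\pi(\Delta)=\sum_{i=1}^n t_i$, from which $\pi(\Delta^m)=\sum_{l=0}^{m-1}\phi(\Delta)^l\pi(\Delta)=m\sum_i t_i$ follows because $\phi(\Delta)\in\operatorname{Sym}_X$ permutes the basis $\{t_i\}$ and hence fixes $\sum_i t_i$ (the identity $\Delta^m=\theta_1\cdots\theta_n$ then drops out since $\pi$ is a bijection). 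The claim $\pi(\Delta)=\sum_i t_i$ is the one genuinely non-mechanical point, and I expect it to be the main obstacle; I would argue it as follows: $\pi$ sends the structure monoid $M$ into $\mathbb{Z}_{\ge 0}^n$ (the cocycle rule preserves non-negativity, since permutations do, and $\pi(X)\subseteq\mathbb{Z}_{\ge 0}^n$); the map $g\mapsto\sum_i[g]_i$ is a homomorphism $G(X,r)\to\mathbb{Z}$ equal to the word length $\ell$ on the generators, so $\sum_i[\Delta]_i=\ell(\Delta)=n$ (cf.\ \cite{chou_art}); and for each $k$, writing $\Delta=x_k s$ in $M$ (legitimate, as $\Delta$ is the left-lcm of $X$), one computes $[\Delta]_k=1+[s]_{D(x_k)}\ge 1$. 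Combining these forces $[\Delta]_k=1$ for every $k$, and the rest is a disciplined unwinding of the cocycle identity and the class-$m$ relation.
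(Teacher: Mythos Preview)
Your argument is correct in all parts. The paper itself does not supply a proof of this lemma: it merely states the formulas as ``computation rules for $\pi$ that are implicit in \cite{etingof} and appear in \cite{deh_coxeterlike} with a different notation,'' so there is no in-paper proof to compare against. Your write-up is essentially the standard unwinding one would extract from those references: parts (i)--(ii) from the definition of $D$ and the class-$m$ identity, parts (iii)--(iv) by telescoping the cocycle rule along the chain $x,D(x),\dots,D^{k-1}(x)$, and part (v) from $\phi(\theta_x)=Id_X$ so that $\pi$ is additive on $N$. Your treatment of (vi), reducing $\pi(\Delta^m)=m\sum_i t_i$ to $\pi(\Delta)=\sum_i t_i$ via the length homomorphism $g\mapsto\sum_i[g]_i$ together with $[\Delta]_k\ge 1$ for each $k$, is a clean self-contained argument; in the literature this fact is usually taken as known from the Garside description of $\operatorname{Div}(\Delta)$ (cf.\ the remark after Lemma~\ref{lem_pi-simples}), but your direct counting works just as well.
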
 	
From Lemma \ref{lem_pi_frozen}$(v)$, for every $ 1\leq i \leq n$,   $[g]_i \equiv 0 \;(\operatorname{mod}  m)$ if and only $g \in N$.  In this lemma, we describe the behaviour of $\pi$ on simple elements.			
	\begin{lem}\label{lem_pi-simples}
		Assume $(X,r)$ is of class $m$.  Let $\mathcal{S}=\operatorname{Div}(\Delta^{m-1})$, $s \in \mathcal{S}$. Let $L_s$ denote the set of left divisors of $s$. Then $\pi(s)= \sum\limits_{i=1}^{i=n}\,[s]_i\,t_i$, where  $0 \leq [s]_i \leq m-1$. Furthermore:
		
		\begin{enumerate}[(i)]			
			\item    $[s]_i \neq 0$ if and only if $x_i \in L_s$.
			\item    $[s]_i =1$ if and only $x_i \in L_s$ and for every $k>1$, $x_i\,D(x_i)\,...D^{k-1}(x_i) \notin L_s$.
				\item    $[s]_i =2$ if and only $x_i\,D(x_i) \in L_s$ and for every $k > 2$, $x_i\,D(x_i)\,...D^{k-1}(x_i) \notin L_s$.
				More generally,  $[s]_i =\ell$  if and only $x_i\,D(x_i)\,...D^{\ell-1}(x_i) \in L_s$
		 and for every  integer  $k >\ell$, $x_i\,D(x_i)\,...D^{k-1}(x_i) \notin L_s$.
	\end{enumerate}

\end{lem}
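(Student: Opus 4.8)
The plan is to reduce the entire statement to one \emph{master equivalence} and then read off every item. Write $M$ for the structure monoid of $(X,r)$. Since $\pi(x)=t_x$ and the $G(X,r)$-action permutes the basis $\{t_x\mid x\in X\}$, every $t\in M$ has $[t]_i\geq 0$, and in fact $\sum_{i}[t]_i=|t|$, the common length of the words representing $t$ (an immediate induction on $|t|$, since a coordinate permutation preserves the coordinate sum). For $\ell\geq 0$ abbreviate $\theta_x^{(\ell)}=x\,D(x)\cdots D^{\ell-1}(x)\in M$, with $\theta_x^{(0)}=1$; for $\ell\leq m$ this is the length-$\ell$ prefix of the frozen element $\theta_x=\theta_x^{(m)}$ of Lemma~\ref{lem_pi_frozen}, while $\theta_x^{(\ell+1)}=x\cdot\theta_{D(x)}^{(\ell)}$ holds for all $\ell$. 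The master equivalence to be proved is: for all $t\in M$, all $1\leq i\leq n$ and all $\ell\geq 0$,
\[
[t]_i\geq \ell \qquad\Longleftrightarrow\qquad \theta_{x_i}^{(\ell)}\ \text{left-divides}\ t .
\]
Granting it, part~(i) is the case $\ell=1$ (as $\theta_{x_i}^{(1)}=x_i$), and parts~(ii), (iii) and the general claim follow at once, since ``$[s]_i=\ell$'' is equivalent to ``$\theta_{x_i}^{(\ell)}\mid s$ and $\theta_{x_i}^{(\ell+1)}\nmid s$'', which in turn is the stated condition ``$\theta_{x_i}^{(\ell)}\in L_s$ and $\theta_{x_i}^{(k)}\notin L_s$ for every $k>\ell$'' because $\theta_{x_i}^{(k)}\mid s$ with $k>\ell$ already forces $\theta_{x_i}^{(\ell+1)}\mid s$.

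The two tools, both consequences of the $1$-cocycle identity $\pi(gh)=\pi(g)+g\bullet\pi(h)$ with $g\bullet t_x=t_{\phi(g)(x)}$, are: \emph{(a)} if $t=ac$ in $M$ then $[t]_i\geq[a]_i$ for all $i$, because $a\bullet\pi(c)$ is a coordinate permutation of the non-negative vector $\pi(c)$; and \emph{(b)} if $t=x_i t'$ then $[t]_i=1+[t']_{i'}$, where $i'$ is the index with $x_{i'}=D(x_i)=\sigma_{x_i}^{-1}(x_i)$, since the $i$-th coordinate of $\sigma_{x_i}\bullet\pi(t')$ is the coordinate of $\pi(t')$ indexed by $\sigma_{x_i}^{-1}(i)$.

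I would then prove the master equivalence by induction on $\ell$, the case $\ell=0$ being trivial. The case $\ell=1$, which asserts $[t]_i\neq 0\iff x_i\mid t$, I would handle by a nested induction on $|t|$: ``$\Leftarrow$'' is \emph{(b)}, and for ``$\Rightarrow$'', if $t\neq1$ pick a generator $x_j$ with $t=x_j t'$; for $j=i$ there is nothing to do, while for $j\neq i$ one gets $[t]_i=[t']_k$ with $k$ the index such that $x_k=\sigma_{x_j}^{-1}(x_i)$, so $x_k\mid t'$ by the nested hypothesis, say $t'=x_k t''$, and the defining relation
\[
x_j x_k=\sigma_{x_j}(x_k)\,\gamma_{x_k}(x_j)=x_i\,\gamma_{x_k}(x_j)
\]
exhibits $x_i$ as a left divisor of $x_j x_k$, hence of $t$. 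For the step $\ell\to\ell+1$ (with $\ell\geq1$): if $\theta_{x_i}^{(\ell+1)}\mid t$ then $x_i\mid t$ by the case $\ell=1$, write $t=x_i t'$, so $\theta_{D(x_i)}^{(\ell)}\mid t'$; by induction $[t']_{i'}\geq\ell$, hence $[t]_i=1+[t']_{i'}\geq\ell+1$ by \emph{(b)}; and conversely if $[t]_i\geq\ell+1$ then $x_i\mid t$, $t=x_i t'$, $[t']_{i'}=[t]_i-1\geq\ell$, so $\theta_{D(x_i)}^{(\ell)}\mid t'$ by induction and therefore $\theta_{x_i}^{(\ell+1)}=x_i\cdot\theta_{D(x_i)}^{(\ell)}$ left-divides $t$.

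Finally I would deduce the displayed bounds $0\leq[s]_i\leq m-1$. Non-negativity is automatic. For the upper bound, since $\Delta$ is the lcm of $X$ for left-divisibility, each $x_i$ left-divides $\Delta$, so $[\Delta]_i\geq 1$ by the master equivalence, while $\sum_i[\Delta]_i=|\Delta|=n$; hence $[\Delta]_i=1$ for all $i$, and by induction on $k$ one gets $\pi(\Delta^{k})=k\sum_i t_i$ for every $k\geq 0$ (the permutation $\phi(\Delta^{k-1})$ fixes the vector $\sum_i t_i$, all of whose coordinates are equal). As $s$ left-divides $\Delta^{m-1}$, tool \emph{(a)} gives $[s]_i\leq[\Delta^{m-1}]_i=m-1$. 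The step I expect to be the real obstacle is the base case $\ell=1$ — showing that a nonzero coordinate $[t]_i$ forces $x_i\in L_t$ — where one must genuinely invoke the Yang--Baxter (defining) relations to move a left divisor across a factorization $t=x_j t'$; everything after that is formal manipulation of the $1$-cocycle $\pi$ together with the elementary facts about the elements $\theta_x^{(\ell)}$ recorded in Lemma~\ref{lem_pi_frozen}.
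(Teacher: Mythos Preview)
The paper does not actually prove Lemma~\ref{lem_pi-simples}: it is stated as a known computation rule for $\pi$, attributed to \cite{etingof}, \cite{deh_coxeterlike} and \cite{chou_aut}, with the remark that it can be rewritten in terms of left braces as in \cite[p.70--77]{cedo}. So there is no proof in the paper to compare against.

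That said, your argument is correct and self-contained. Reducing everything to the master equivalence $[t]_i\geq\ell\iff\theta_{x_i}^{(\ell)}\mid t$ is exactly the right organisation, and both tools \emph{(a)} and \emph{(b)} are straightforward consequences of the cocycle identity together with the fact that the action of $G(X,r)$ on $\mathbb{Z}^X$ permutes the standard basis. The crucial base case $\ell=1$ is handled cleanly: the rewriting $x_jx_k=x_i\,\gamma_{x_k}(x_j)$ with $x_k=\sigma_{x_j}^{-1}(x_i)$ is precisely the defining relation of the structure monoid, and cancellativity then lets you carry the induction through. The inductive step uses left cancellativity (to pass from $x_i\theta_{D(x_i)}^{(\ell)}\mid x_it'$ to $\theta_{D(x_i)}^{(\ell)}\mid t'$), which you invoke implicitly but which is available since $M$ is Garside. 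The bound $[s]_i\leq m-1$ via $\pi(\Delta^{m-1})=(m-1)\sum_i t_i$ is fine.

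One cosmetic point: in the step ``if $\theta_{x_i}^{(\ell+1)}\mid t$ then $x_i\mid t$ by the case $\ell=1$'', what you really use is just that $x_i$ is a left prefix of $\theta_{x_i}^{(\ell+1)}$, not the already-proved base case; you may want to rephrase this to avoid the appearance of circularity.
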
 
 Note that if  $s \in \operatorname{Div}(\Delta^{})$, then $\pi(s)= \sum\limits_{i\in I}t_i$, where $I=\{1 \leq i \leq n$$\mid\;$$x_i$  is a left divisor of $s\}$.
Lemmas  \ref{lem_pi_frozen} and \ref{lem_pi-simples} can be  rewritten in terms of left braces, as  it appears in \cite[p.70-77]{cedo}.  

\begin{rem}\label{rem-N-contained-ker}
	From lemma \ref{lem_pi_frozen}$(ii)$, and the definition of the class of the solution, $\phi(\theta_x)=Id_X$. That is, the normal subgroup $N \subset K$, where $K$ denotes  $\operatorname{Ker}(\phi)$  (or $\operatorname{Soc}(G(X,r)))$.  In $G(X,r)$,  there are $\mid\mathcal{G}\mid$ cosets of $K$  and $m^n$ cosets of $N$, so there are $\frac{m^n}{\mid\mathcal{G}\mid}$ cosets of $N$  in $K$, where $\mathcal{G}$ denotes the IYB group $\mathcal{G}(X,r)$. From the construction of the Coxeter-like quotient group, to each coset of $N$  in   $G(X,r)$,   there exists a unique representative of the coset that belongs to  $\operatorname{Div}(\Delta^{m-1})$ and so to each coset of $K$  there are   $\frac{m^n}{\mathcal{G}}$ representatives of the coset that belong to  $\operatorname{Div}(\Delta^{m-1})$.
\end{rem}

\begin{lem}\label{lem-simples-mod-m}
		Assume $(X,r)$ is of class $m$.  Let $\mathcal{S}=\operatorname{Div}(\Delta^{m-1})$.
	Let  $T_K=\{v_1=1,v_2,...,v_{\mid\mathcal{G}\mid }\}$ denote the set of  $\mid\mathcal{G}\mid$ representatives  of the cosets of   $K$ in $G(X,r)$ that belong to  $\mathcal{S}$. Let  $T=\{u_1=1,u_2,...,u_r\}$ denote the set of  $\frac{m^n}{\mid\mathcal{G}\mid}$ representatives  of $N$ in $K$  that belong to  $\mathcal{S}$. 
	Then, for every $u_p \in T$, $v_q \in T_K$, there exists  a unique $w_{p,q} \in \mathcal{S}$ such that $Nu_pv_q=Nw_{p,q}$.  Furthermore, $\pi(w_{p,q} )=\pi(u_p)+\pi(v_q)+ m\sum\limits_{i}^{}\alpha_it_i$, $\alpha_i\in \mathbb{Z}$.
\end{lem}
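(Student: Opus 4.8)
The plan is to obtain existence and uniqueness of $w_{p,q}$ directly from the construction of the Coxeter-like quotient group, and then to compute $\pi(w_{p,q})$ from the $1$-cocycle identity, using the crucial fact that everything relevant here ($N$, and hence $T$) sits inside $K = \operatorname{Ker}(\phi)$ and therefore acts trivially on $\mathbb{Z}^{n}$.

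First I would settle existence and uniqueness. By Remark \ref{rem-N-contained-ker} — equivalently, by the theorem of \cite{deh_coxeterlike} recalled there, that $W = G(X,r)/N$ is a Coxeter-like group whose elements are in bijection with $\mathcal{S} = \operatorname{Div}(\Delta^{m-1})$ — every coset of $N$ in $G(X,r)$ contains exactly one element of $\mathcal{S}$. Applying this to the coset $N u_p v_q$ immediately produces the desired unique $w_{p,q} \in \mathcal{S}$ with $N w_{p,q} = N u_p v_q$.

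Next, for the formula, I would write $w_{p,q} = \theta\, u_p\, v_q$ for some $\theta \in N$, which is possible precisely because $N w_{p,q} = N u_p v_q$, and then expand $\theta = \theta_1^{\alpha_1}\cdots\theta_n^{\alpha_n}$ with $\alpha_i \in \mathbb{Z}$, using that $N$ is free abelian on the frozen elements of length $m$. Since $N \subseteq K$ by Remark \ref{rem-N-contained-ker} and $T \subseteq K$ by definition, both $\theta$ and $u_p$ lie in $K = \operatorname{Ker}(\phi)$; by the description of the action in Theorem \ref{prop-etingof}(iii), any $g \in K$ satisfies $g \bullet t_x = t_{\phi(g)(x)} = t_x$, i.e.\ fixes $\mathbb{Z}^{n}$ pointwise. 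Applying the $1$-cocycle relation $\pi(gh) = \pi(g) + g\bullet\pi(h)$ of Theorem \ref{prop-etingof}(iv) twice then collapses the mixed term, giving
\[ \pi(w_{p,q}) = \pi(\theta) + \theta\bullet\pi(u_p v_q) = \pi(\theta) + \pi(u_p) + \pi(v_q), \]
and Lemma \ref{lem_pi_frozen}(v) gives $\pi(\theta) = m\sum_{i}\alpha_i t_i$. Hence $\pi(w_{p,q}) = \pi(u_p) + \pi(v_q) + m\sum_{i}\alpha_i t_i$, as claimed.

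I do not expect a real obstacle here. The only points requiring care are the coset bookkeeping — writing $w_{p,q}$ on the correct side as $\theta u_p v_q$ with $\theta \in N$, and invoking normality of $N$ so that the argument is independent of that choice — and the verification that membership in $K = \operatorname{Ker}(\phi)$ genuinely forces the trivial action on $\mathbb{Z}^{n}$, which is immediate from the explicit formula $g \bullet t_x = t_{\phi(g)(x)}$.
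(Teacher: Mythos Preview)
Your proposal is correct and follows essentially the same approach as the paper: both obtain the unique $w_{p,q}$ from the bijection between $\mathcal{S}$ and the cosets of $N$ (Remark~\ref{rem-N-contained-ker}), write $w_{p,q}=g_{_N}u_pv_q$ with $g_{_N}\in N$, and then exploit that $g_{_N},u_p\in K$ act trivially so the cocycle identity yields $\pi(w_{p,q})=\pi(g_{_N})+\pi(u_p)+\pi(v_q)$, with $\pi(g_{_N})\in m\mathbb{Z}^n$. Your write-up is in fact a bit more explicit than the paper's about why elements of $K$ act trivially and about which lemma (\ref{lem_pi_frozen}(v)) gives $\pi(g_{_N})=m\sum_i\alpha_i t_i$.
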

\begin{proof}
	From Remark \ref{rem-N-contained-ker}, to each coset of $N$  in   $G(X,r)$,   there exists a unique representative of the coset that belongs to  $\mathcal{S}$,  so to each coset $Nu_pv_q$,   there exists a unique representative  $w_{p,q} \in \mathcal{S}$ such that $Nw_{p,q}=Nu_pv_q$ and there is $g_{_{N}}\in N$, such that 
	$w_{p,q} =g_{_{N}}u_pv_q$. Moreover, $\pi(w_{p,q} )=\pi(g_{_{N}}u_p)+\pi(v_q)$, since $g_{_{N}}$ and $u_p$ act trivially and so, from Lemma \ref{lem_pi-simples},  $\pi(w_{p,q} )=\pi(u_p)+\pi(v_q)+ m\sum\limits_{i}^{}\alpha_it_i$, $\alpha_i\in \mathbb{Z}$.
\end{proof}

\section{Proof of the main result}

The structure group $G(X,r)$ of  a   non-degenerate, involutive set-theoretic solution $(X,r)$,  with $\mid X \mid =n$ is a Bieberbach group of rank  $n$, that  is a torsion-free crystallographic group (a discrete cocompact subgroup of  the group of  isometries of $\mathbb{R}^n$).  Indeed,  $G(X,r)$ acts freely on $\mathbb{R}^n$ by isometries  (see  \cite{gateva_van}, \cite{jespers_book}[p.218]). From  the action of $G(X,r)$  it results that $G(X,r)$  embeds in $GL_{n+1}(\mathbb{R})$ as a subgroup of matrices with a  permutation matrix  in the upper $n\times n$ block and a translation part in the last column in the following way:

\begin{figure}[h]
	\centering
\begin{equation}\label{eqn-psi-embed}
	\psi: \; G(X,r)  \; \rightarrow    \; GL_{n+1}(\mathbb{R}) 
\end{equation}
         \[  g      \mapsto 	 \begin{pmatrix}
	\begin{matrix}
A_g\\
		\end{matrix}
	&  \rvline &\pi(g)\\	
	\hline
		\begin{matrix}
	0&... &0\\
	\end{matrix}
		& \rvline
	& 1 \\
	\end{pmatrix} 
\]
\end{figure}
We denote by $A_g$ the $n \times n $ permutation matrix corresponding to $\phi(g)$, written as a representing matrix. As an example, the first column in the  permutation matrix of $(1,2,3)$ is  $(0,1,0)^t$. 
Note that  $G(X,r)$ embeds also  in the group $GL_{n+1}(\mathbb{Z})$ and that this embedding is the same as  the embedding  $\tilde{\Phi}: G(X,r) \rightarrow \mathbb{Z}^{X} \rtimes \operatorname{Sym}_X$ defined by $\tilde{\Phi} (g)=(\pi(g), \phi(g))$, written in a matrix form. However, it is not fit for our purpose as $\mathbb{Z}$ is not a division ring.

The monomorphism of groups  $\psi$ induces a representation of $G(X,r)$ in the algebra $M_{n+1}(\mathbb{R})$ and by linear extension $\psi$ induces  a  unital homomorphism of  $\mathbb{R}$-algebras $\hat{\psi}:  \mathbb{R}G  \rightarrow    M_{n+1}(\mathbb{R}) $, where $\mathbb{R}G$ denotes the group algebra of $G(X,r)$, defined by:

\begin{figure}[h]
	\centering
	\begin{equation}\label{eqn-psi-gp-alg}
\hat{\psi}: \; \mathbb{R}G  \; \rightarrow    \; M_{n+1}(\mathbb{R}) 
	\end{equation}
	\[  \sum\limits_{g \in G}k_g\,g  \;    \mapsto\; 	 \sum\limits_{g \in G}k_g\,\psi(g) 	\]
\end{figure}
The basis of   $\mathbb{R}G$ is the set of all group elements $g \in G(X,r)$ and every element in  $\mathbb{R}G$  can be written as    $\sum\limits_{g \in G}k_g\,g$, with $k_g\neq 0$ for only  finitely many elements $g \in G(X,r)$.
The subalgebra  of  $M_{n+1}(\mathbb{R}) $, $\operatorname{Im}(\hat{\psi})=\{\sum\limits_{g \in G}k_g\,\psi(g) 	 \mid k_g \in \mathbb{R}\}$  is spanned by  the infinite set $\{\psi(g) 	\mid g \in G(X,r)\}$. However,  we show that it is finite-dimensional and that it has a finite basis composed of images of some simples and of frozen elements.\\
We define  the set $\{\mathcal{E}_k \mid 1 \leq k \leq n\}$ $\subset$  $M_{n+1}(\mathbb{R}) $:
\begin{gather*}
(\mathcal{E}_k)_{k,n+1}=1 \;\;   \\
(\mathcal{E}_k)_{i,j}=0, \;\; i \neq k \; ;\;  j \neq n+1
\end{gather*}
\begin{lem}\label{lem-all-images-product-simple}
		Let $m \geq 2$ be the class of the solution $(X,r)$. Let $N=\langle \theta_1,...,\theta_n\rangle$ be the normal subgroup of  $G(X,r)$ generated by the $n$ frozen elements of length $m$.  Let 	$\mathcal{S}=\operatorname{Div}(\Delta^{m-1})$ be the set of simples.  Let $g\in G(X,r)$ . Then 
		\begin{enumerate}[(i)]
				\item  	There exist $s \in \mathcal{S} $  and $g_{_{ N} }\in N$, $g_{_{ N} } =\prod\limits_{j=1}^{j=k}\theta_{i_j} ^{\alpha_{i_j}}$,  $\alpha_{i_j}\in \mathbb{Z}$,  such that $ \psi(g)=\psi(g_{_{ N} })\,\psi(s)$.
			\item  $ \psi(g)=\psi(s)\,+\; m\sum\limits_{j=1}^{j=k}\alpha_{i_j}\,\mathcal{E}_{i_j}$.
		\end{enumerate}

\end{lem}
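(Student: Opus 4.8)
The plan is to use the identification $G(X,r)/N\cong W$ with $W$ the Coxeter-like group, together with the inclusion $N\subseteq\operatorname{Ker}(\phi)$, and then to read off both assertions from the explicit matrix form of $\psi$ in \eqref{eqn-psi-embed}.

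First I would prove (i). By the construction of the Coxeter-like quotient group recalled before Lemma \ref{lem_pi_frozen} (see \cite{deh_coxeterlike} and Remark \ref{rem-N-contained-ker}), each coset of $N$ in $G(X,r)$ contains a unique element of $\mathcal{S}=\operatorname{Div}(\Delta^{m-1})$. Applying this to the coset $Ng$, we obtain a unique $s\in\mathcal{S}$ with $Ng=Ns$, hence $g=g_{_N}s$ with $g_{_N}=gs^{-1}\in N$. Since $N=\langle\theta_1,\dots,\theta_n\rangle$ is free abelian of rank $n$, we may write $g_{_N}=\prod_{j=1}^{k}\theta_{i_j}^{\alpha_{i_j}}$ with $\alpha_{i_j}\in\mathbb{Z}$ and the indices $i_j$ pairwise distinct. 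As $\psi$ is a group homomorphism, $\psi(g)=\psi(g_{_N})\,\psi(s)$, which is (i); note that this decomposition of $g$, and hence the coefficients $\alpha_{i_j}$, are uniquely determined by $g$ once an ordering of the $\theta_i$ is fixed.

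For (ii) I would compute $\psi(g_{_N})$ explicitly. By Remark \ref{rem-N-contained-ker} we have $N\subseteq\operatorname{Ker}(\phi)$, so $\phi(g_{_N})=Id_X$ and the permutation block $A_{g_{_N}}$ of $\psi(g_{_N})$ is the identity matrix $I_n$. Therefore $\psi(g_{_N})=I_{n+1}+\sum_{i=1}^{n}[g_{_N}]_i\,\mathcal{E}_i$, where $[g_{_N}]_i$ is the $i$-th coordinate of $\pi(g_{_N})$, which sits in the last column of the matrix. By Lemma \ref{lem_pi_frozen}(v),
\[
\pi(g_{_N})=\pi\Bigl(\prod_{j=1}^{k}\theta_{i_j}^{\alpha_{i_j}}\Bigr)=m\sum_{j=1}^{k}\alpha_{i_j}t_{i_j},
\]
so $\psi(g_{_N})=I_{n+1}+m\sum_{j=1}^{k}\alpha_{i_j}\mathcal{E}_{i_j}$. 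Finally one observes that $\mathcal{E}_k\,\psi(h)=\mathcal{E}_k$ for every $k$ and every $h\in G(X,r)$: indeed $\mathcal{E}_k$ has its unique nonzero entry, equal to $1$, in position $(k,n+1)$, so the only possibly nonzero row of $\mathcal{E}_k\psi(h)$ is the $k$-th, and it coincides with the $(n+1)$-st row of $\psi(h)$, namely $(0,\dots,0,1)$; hence $\mathcal{E}_k\psi(h)=\mathcal{E}_k$. Combining these,
\[
\psi(g)=\psi(g_{_N})\psi(s)=\Bigl(I_{n+1}+m\sum_{j=1}^{k}\alpha_{i_j}\mathcal{E}_{i_j}\Bigr)\psi(s)=\psi(s)+m\sum_{j=1}^{k}\alpha_{i_j}\mathcal{E}_{i_j},
\]
which is (ii).

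There is no serious obstacle: the statement is a direct consequence of (a) the bijection between $\mathcal{S}$ and $W=G(X,r)/N$, (b) the inclusion $N\subseteq\operatorname{Ker}(\phi)$, and (c) Lemma \ref{lem_pi_frozen}(v). The only point requiring a little (routine) care is the bookkeeping of the matrix products, in particular the identity $\mathcal{E}_k\psi(h)=\mathcal{E}_k$, which expresses that left multiplication by $\mathcal{E}_k$ only detects the last row of $\psi(h)$. The uniqueness of the $\alpha_{i_j}$ noted above is what will let us later extract an actual basis from $\{\psi(s)\mid s\in\mathcal{S}\}\cup\{\psi(\theta_i)\mid 1\leq i\leq n\}$ in the proof of Theorem \ref{theo}.
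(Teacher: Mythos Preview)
Your proof is correct and follows essentially the same route as the paper: both use the coset decomposition $g=g_{_N}s$ coming from the bijection between $\mathcal{S}$ and $G(X,r)/N$, the inclusion $N\subseteq\operatorname{Ker}(\phi)$, and Lemma~\ref{lem_pi_frozen}(v) to compute $\pi(g_{_N})$. The only cosmetic difference is in the final matrix step: the paper computes the product $\psi(g_{_N})\psi(s)$ directly from the block form and then splits the resulting matrix as $\psi(s)$ plus a last-column correction, whereas you write $\psi(g_{_N})=I_{n+1}+m\sum_j\alpha_{i_j}\mathcal{E}_{i_j}$ and invoke the identity $\mathcal{E}_k\psi(h)=\mathcal{E}_k$; these are two phrasings of the same $2\times 2$ block computation.
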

\begin{proof}
$(i)$	From Section \ref{secQYBE_Garside}, $N$ is a subgroup of finite index $m^n$ and $G(X,r)$ is the disjoint union of  $m^n$ cosets of $N$ of the form $Ns$, with $s \in \mathcal{S} $. So,  there exist $g_{_{ N} } \in N$ and $s \in \mathcal{S} $ such that $g=g_{_{ N} }s$, and since $\psi$ is a multiplicative homomorphism,  $\psi(g)=\psi(g_{_{ N} })\,\psi(s)$.  \\
$(ii)$ From the definition of $N$, $g_{_{ N} } =\prod\limits_{j=1}^{j=k}\theta_{i_j} ^{\alpha_{i_j}}$,  $\alpha_{i_j}\in \mathbb{Z}$,  and from Remark \ref{rem-N-contained-ker},  $\phi(g_{_{ N} })=Id_X$ and $\pi(g_{_{ N} })=m\sum\limits_{j=1}^{j=k}\alpha_{i_j}\,t_{i_j}$.
	From $(i)$, $ \psi(g)=\psi(g_{_{ N} })\,\psi(s)$ and this  product  can be written as a  sum of matrices:  $  	 \begin{pmatrix}
	\begin{matrix}
	A_s\\
	\end{matrix}
	&  \rvline &\pi(s)+\pi(g_{_{ N} })\\	
	\hline
	\begin{matrix}
	0&... &0\\
	\end{matrix}
	& \rvline
	& 1 \\
	\end{pmatrix} =
	\begin{pmatrix}
	\begin{matrix}
	A_s\\
	\end{matrix}
	&  \rvline &\pi(s)\\	
	\hline
	\begin{matrix}
	0&... &0\\
	\end{matrix}
	& \rvline
	& 1 \\
	\end{pmatrix}+
	\begin{pmatrix}
	\begin{matrix}
	0^{n\times n}\\
	\end{matrix}
	&  \rvline & \pi(g_{_{ N} })\\	
	\hline
	\begin{matrix}
	0&... &0\\
	\end{matrix}
	& \rvline
	& 0 \\
	\end{pmatrix}$.\\
	That is,  $ \psi(g)=\psi(s)\,+\; m\sum\limits_{j=1}^{j=k}\alpha_{i_j}\,\mathcal{E}_{i_j}$.
	\end{proof}

\begin{lem}\label{lem-ei-belong-simples}
Let $m \geq 2$ be the class of the solution $(X,r)$.   Then,  the set $\{\mathcal{E}_k  \mid 1 \leq k \leq n\}$ belongs to $\operatorname{Span} (\{\psi(\theta_k)\mid  1 \leq k\leq n\}\cup\{\psi(1)\})$.
	\end{lem}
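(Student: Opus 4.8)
The plan is to simply write out the matrix $\psi(\theta_k)$ explicitly and observe that it differs from $\psi(1)$ by an $m$-multiple of $\mathcal{E}_k$. First I would recall from Remark \ref{rem-N-contained-ker} (equivalently, from the definition of the class of the solution together with Lemma \ref{lem_pi_frozen}$(ii)$) that each frozen element $\theta_k$ lies in $\operatorname{Ker}(\phi)$, so that $\phi(\theta_k)=Id_X$ and hence the upper $n\times n$ block $A_{\theta_k}$ of $\psi(\theta_k)$ is the identity matrix $I_n$. Next, by Lemma \ref{lem_pi_frozen}$(iii)$, writing $\pi(x_k)=t_k$, we have $\pi(\theta_k)=m\,t_k$, i.e. the translation column of $\psi(\theta_k)$ is the vector with entry $m$ in position $k$ and $0$ elsewhere.

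Putting these two facts into the definition \eqref{eqn-psi-embed} of $\psi$, I get
\[
\psi(\theta_k)=\begin{pmatrix}
\begin{matrix} I_n \end{matrix} & \rvline & m\,t_k\\
\hline
\begin{matrix} 0 & \cdots & 0 \end{matrix} & \rvline & 1
\end{pmatrix}
= I_{n+1}+m\,\mathcal{E}_k
= \psi(1)+m\,\mathcal{E}_k ,
\]
since $\psi(1)=I_{n+1}$ and $\mathcal{E}_k$ is by definition the matrix whose only nonzero entry is a $1$ in position $(k,n+1)$. As $m\geq 2$ is invertible in $\mathbb{R}$, I can solve for $\mathcal{E}_k$:
\[
\mathcal{E}_k=\tfrac{1}{m}\bigl(\psi(\theta_k)-\psi(1)\bigr),
\]
which exhibits $\mathcal{E}_k$ as an $\mathbb{R}$-linear combination of $\psi(\theta_k)$ and $\psi(1)$; hence $\{\mathcal{E}_k\mid 1\leq k\leq n\}\subset \operatorname{Span}(\{\psi(\theta_k)\mid 1\leq k\leq n\}\cup\{\psi(1)\})$, as claimed.

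There is essentially no obstacle here: the only point requiring care is the correct bookkeeping of which block of the matrix $\psi(\theta_k)$ is affected, namely that $\theta_k$ contributes trivially to the permutation part (because it is frozen, hence in the socle) and contributes exactly $m\,t_k$ to the translation part. Both are immediate consequences of results already established in Section \ref{secQYBE_Garside}.
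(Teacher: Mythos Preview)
Your proof is correct and follows essentially the same route as the paper: compute $\psi(\theta_k)$ explicitly, observe that its permutation block is $I_n$ (since $\theta_k\in\operatorname{Ker}(\phi)$) and its translation column is $m\,t_k$, hence $\psi(\theta_k)=\psi(1)+m\,\mathcal{E}_k$, and solve for $\mathcal{E}_k$. The only difference is cosmetic: the paper cites Lemma~\ref{lem_pi-simples} for the translation part, whereas you (more accurately) invoke Lemma~\ref{lem_pi_frozen}$(iii)$.
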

\begin{proof}
From the definition of $\psi$, $  	\psi(1)= \begin{pmatrix}
\begin{matrix}
I^{n\times n}\\
\end{matrix}
&  \rvline & 0^{n \times 1}\\	
\hline
\begin{matrix}
0&... &0\\
\end{matrix}
& \rvline
& 1 \\
\end{pmatrix} $ and $	\psi(\theta_k)= \begin{pmatrix}
	\begin{matrix}
		I^{n\times n}\\
	\end{matrix}
	&  \rvline & \pi(\theta_k)\\	
	\hline
	\begin{matrix}
		0&... &0\\
	\end{matrix}
	& \rvline
	& 1 \\
\end{pmatrix} $,  $ 1 \leq k \leq n$.  So,  from Lemma \ref{lem_pi-simples},  $\psi(\theta_k)=\psi(1) + m\mathcal{E}_k$.  That is,   $\mathcal{E}_k = \frac{1}{m}(\psi(\theta_k)-\psi(1) )$, which implies  $\{\mathcal{E}_k  \mid 1 \leq k \leq n\}$ belongs to $\operatorname{Span} (\{\psi(\theta_k)\mid  1 \leq k\leq n\}\cup\{\psi(1)\})$.
\end{proof}
We reformulate now Theorem \ref{theo} in a more precise way and give its proof.
\begin{thm}\label{thm}
		Let $(X,r)$ be  a  non-degenerate involutive set-theoretic solution, with  $\mid X \mid =n$, and structure group $G(X,r)$.  Let $m \geq 2$ be the class of the solution $(X,r)$.    Let $\mathcal{S}=\operatorname{Div}(\Delta^{m-1})$.
		Let  $T_K=\{v_1=1,v_2,...,v_{\mid\mathcal{G}\mid }\}$ denote the set of  $\mid\mathcal{G}\mid$ representatives  of the cosets of   $K$ in $G(X,r)$ that belong to  $\mathcal{S}$.  Let $ \theta_1,...,\theta_n$  the $n$ frozen elements of length $m$.  Then,  
		$\operatorname{Im}(\hat{\psi})$ is a  finite-dimensional real vector space and it has a basis a subset of  
			the set  $\{\psi(\theta_k)\mid  1 \leq k\leq n\} \cup \{\psi(v_q)\mid v_q \in T_K\})$. Moreover, the dimension of $\operatorname{Im}(\hat{\psi})$  is at most  $n +\mid \mathcal{G}\mid$.
\end{thm}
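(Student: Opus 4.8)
The plan is to reduce the problem to the two lemmas just established and a counting argument using the coset structure of $N$ and $K$ inside $G(X,r)$. First I would observe that $\operatorname{Im}(\hat\psi)$ is spanned by $\{\psi(g)\mid g\in G(X,r)\}$, so it suffices to show every $\psi(g)$ lies in the span of the proposed finite set. By Lemma~\ref{lem-all-images-product-simple}$(ii)$, for each $g$ there is a simple $s\in\mathcal S$ and integers $\alpha_{i_j}$ with $\psi(g)=\psi(s)+m\sum_j\alpha_{i_j}\mathcal E_{i_j}$, and by Lemma~\ref{lem-ei-belong-simples} each $\mathcal E_k$ lies in $\operatorname{Span}(\{\psi(\theta_k)\}\cup\{\psi(1)\})$. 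Hence $\operatorname{Im}(\hat\psi)=\operatorname{Span}(\{\psi(s)\mid s\in\mathcal S\}\cup\{\psi(\theta_k)\mid 1\le k\le n\})$, which is already finite-dimensional since $\mathcal S=\operatorname{Div}(\Delta^{m-1})$ is finite. This proves finite-dimensionality and exhibits a spanning subset of $\{\psi(s)\mid s\in\mathcal S\}\cup\{\psi(\theta_i)\}$, which is the content of Theorem~A.

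Next I would sharpen the spanning set from all of $\mathcal S$ down to the representatives $T_K$ of the cosets of $K$. The key point is that $\psi(g)$ depends on $g$ only through the pair $(A_g,\pi(g))=(\phi(g),\pi(g))$, and modulo the translation directions $\mathcal E_1,\dots,\mathcal E_n$ it depends only on $(\phi(g),\pi(g)\bmod m)$. Using Lemma~\ref{lem-simples-mod-m}: writing an arbitrary simple $s\in\mathcal S$ in the form $Nu_pv_q=Ns$ with $u_p\in T$ (representatives of $N$ in $K$) and $v_q\in T_K$, we get $\psi(s)=\psi(v_q)+m\sum_i\beta_i\mathcal E_i$ for suitable integers $\beta_i$, because $\phi(s)=\phi(v_q)$ (since $u_p,$ and elements of $N$, lie in $K=\operatorname{Ker}\phi$) and $\pi(s)\equiv\pi(v_q)\pmod m$ by Lemma~\ref{lem-simples-mod-m}. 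Therefore each $\psi(s)$ with $s\in\mathcal S$ lies in $\operatorname{Span}(\{\psi(v_q)\mid v_q\in T_K\}\cup\{\mathcal E_k\mid 1\le k\le n\})$, and combining with Lemma~\ref{lem-ei-belong-simples} we conclude
\[
\operatorname{Im}(\hat\psi)=\operatorname{Span}\bigl(\{\psi(\theta_k)\mid 1\le k\le n\}\cup\{\psi(v_q)\mid v_q\in T_K\}\bigr).
\]
Since $\psi(1)\in T_K$ (as $v_1=1$), no separate $\psi(1)$ term is needed. Extracting a basis from this spanning set and counting gives $\dim\operatorname{Im}(\hat\psi)\le n+\mid\mathcal G\mid$.

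The main obstacle I anticipate is bookkeeping around the two-stage coset decomposition: one must be careful that the element $g_N\in N$ appearing in Lemma~\ref{lem-all-images-product-simple} and the element $u_p\in T\subseteq K$ appearing in Lemma~\ref{lem-simples-mod-m} both act trivially under $\phi$ (so that the permutation block $A_{\psi(s)}$ genuinely equals $A_{\psi(v_q)}$), and that the resulting translation discrepancies are indeed integer multiples of $m$ in each coordinate—this is exactly what Remark~\ref{rem-N-contained-ker} together with Lemmas~\ref{lem_pi_frozen} and~\ref{lem-simples-mod-m} guarantee. A secondary point worth stating explicitly is that the bound $n+\mid\mathcal G\mid$ need not be sharp: the $\psi(v_q)$ may satisfy linear relations among themselves or with the $\psi(\theta_k)$, which is why the theorem asserts only that a \emph{subset} of the spanning set is a basis and gives an upper bound on the dimension rather than an exact value.
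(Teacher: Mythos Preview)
Your proposal is correct and follows essentially the same route as the paper: apply Lemma~\ref{lem-all-images-product-simple}(ii) to reduce to $\{\psi(s)\mid s\in\mathcal S\}\cup\{\mathcal E_k\}$, use Lemma~\ref{lem-ei-belong-simples} to absorb the $\mathcal E_k$ into $\operatorname{Span}(\{\psi(\theta_k)\}\cup\{\psi(1)\})$, and then use Lemma~\ref{lem-simples-mod-m} together with $u_p\in K=\operatorname{Ker}\phi$ to reduce each $\psi(s)$ to $\psi(v_q)$ plus a combination of the $\mathcal E_k$.

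One small correction: your assertion that $\pi(s)\equiv\pi(v_q)\pmod m$, and hence that $\psi(s)=\psi(v_q)+m\sum_i\beta_i\mathcal E_i$ with integer $\beta_i$, is not what Lemma~\ref{lem-simples-mod-m} gives. That lemma yields $\pi(s)=\pi(u_p)+\pi(v_q)+m\sum_i\alpha_i t_i$, and the coordinates of $\pi(u_p)$ lie in $\{0,\dots,m-1\}$ but are not in general zero, so the translation discrepancy is not a multiple of $m$ coordinatewise (the paper accordingly writes $\psi(s)=\psi(v_q)+\sum_j\beta_j\mathcal E_j+m\sum_i\alpha_i\mathcal E_i$). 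This does not affect your argument, since all you actually need is $A_s=A_{v_q}$ so that $\psi(s)-\psi(v_q)\in\operatorname{Span}\{\mathcal E_k\}$, and over $\mathbb{R}$ the $\mathcal E_k$ are in the span of $\{\psi(\theta_k),\psi(1)\}$ regardless of the integer coefficients involved.
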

\begin{proof}
	From Lemma \ref{lem-all-images-product-simple}, for every $g \in G(X,r)$, there exists $ s \in \mathcal{S}$ such that $ \psi(g)=\psi(s)\,+\; m\sum\limits_{j=1}^{j=k}\alpha_{i_j}\,\mathcal{E}_{i_j}$. From Lemma \ref{lem-ei-belong-simples},  $\mathcal{E}_{i_j} $  belong to $\operatorname{Span} (\{\psi(\theta_k)\mid  1 \leq k\leq n\}\cup\{\psi(1)\})$.  So,  $ \psi(g)$ belongs to the span of  $\{\psi(\theta_k)\mid  1 \leq k\leq n\}\cup\{\psi(s) \mid s \in \mathcal{S}\})$. It remains to show $\{\psi(s) \mid s \in \mathcal{S}\}$ is contained in  $\operatorname{Span} (\{\psi(\theta_k)\mid  1 \leq k\leq n\} \cup \{\psi(v_q)\mid v_q \in T_K\})$. Every $s \in \mathcal{S}$ is a unique representative of a coset of $N$ in $G(X,r)$, so  there exist  $u_p \in T$, $v_q \in T_K$  such that $Ns=Nu_pv_q$, and from Lemma \ref{lem-simples-mod-m},  $\pi(s )=\pi(u_p)+\pi(v_q)+ m\sum\limits_{i}^{}\alpha_it_i$. So, $\psi(s)=  \begin{pmatrix}
\begin{matrix}
A_{v_q}\\
\end{matrix}
&  \rvline &\pi(s )\\	
\hline
\begin{matrix}
0&... &0\\
\end{matrix}
& \rvline
& 1 \\
\end{pmatrix} =
\begin{pmatrix}
\begin{matrix}
A_{v_q}\\
\end{matrix}
&  \rvline &\pi(v_q)\\	
\hline
\begin{matrix}
0&... &0\\
\end{matrix}
& \rvline
& 1 \\
\end{pmatrix}+
\begin{pmatrix}
\begin{matrix}
0^{n\times n}\\
\end{matrix}
&  \rvline & \pi(u_p)\\	
\hline
\begin{matrix}
0&... &0\\
\end{matrix}
& \rvline
& 0 \\
\end{pmatrix} +m \sum\limits_{i}^{}\alpha_i\mathcal{E}_i=
\psi(v_q)+\sum\limits_{j}^{}\beta_j\mathcal{E}_j+ m \sum\limits_{i}^{}\alpha_i\mathcal{E}_i$. So, 	the set  $\{\psi(\theta_k)\mid  1 \leq k\leq n\} \cup \{\psi(v_q)\mid v_q \in T_K\}$ spans $\operatorname{Im}(\hat{\psi})$. As there are $n$ frozen elements of length $m$ and $\mid \mathcal{G}\mid$  elements in $T_K$,  the dimension of $\operatorname{Im}(\hat{\psi})$  is less equal than $n +\mid \mathcal{G}\mid$.
\end{proof}

 In the case of Example  \ref{exemple:exesolu_et_gars},  the  group  $\mathcal{G}(X,r)$ has order 8 and 	$\operatorname{Im}(\hat{\psi})$  is spanned by: \\
  $\{\psi(1), \psi(x_1), \psi(x_2),\psi(x_3),\psi(x_4),  \psi(x_1x_2),\psi(x_1x_3),\psi(x_1x_4)\}\cup \{\psi(x_1^2),\psi(x_2x_4),\psi(x_3^2), \psi(x_4x_2)\}$. This spanning set of  $12$ elements is linearly dependant and the dimension of 	$\operatorname{Im}(\hat{\psi})$  is $10$.  In the case of the (permutation) solution $(X,r)$ with $X=\{x_1,x_2,x_3\}$, $\sigma_1=\sigma_2=\sigma_3=(1,3,2)$, and  with structure group $G(X,r)=\operatorname{Gp}\langle x_1^2=x_3x_2, x_2^2=x_1x_3, x_3^2=x_2x_1\rangle$. The solution is  of class $3$, with the  frozen elements of length $3$:   $\theta_1=x_1x_2x_3,\theta_2=x_2x_3x_1, \theta_3=x_3x_1x_2$. 
The  group  $\mathcal{G}(X,r)$ has order 3  and  	$\operatorname{Im}(\hat{\psi})$  is  spanned by  the  six  linearly independant  elements: 
$\{\psi(1), \psi(x_1), \psi(x_2x_1)\}\cup\{\psi(x_1x_2x_3),\psi(x_2x_3x_1), \psi(x_3x_1x_2)\}$. The dimension of $\operatorname{Im}(\hat{\psi})$  is $6$.\\

The embedding of groups $\tilde{\Phi} : G(X,r) \rightarrow \mathbb{Z}^{X} \rtimes \operatorname{Sym}_X$
  is equivalent to the  embedding 
$\psi_{_{\mathbb{Z}}}: \; G(X,r)  \; \rightarrow    \; GL_{n+1}(\mathbb{Z}) $,  with  $\psi_{_{\mathbb{Z}}}$ defined in the same way as in  Equation  \ref{eqn-psi-embed}.  From Remark \ref{rem-N-contained-ker}, 
 $\tilde{\Phi}$   extends to a homomorphism of left braces, with the definition of  $\oplus$ on $\operatorname{Im}(\tilde{\Phi} ( G(X,r)))$ by   $(a, \varphi(a))\oplus(b,\varphi(b))=(a+b, \varphi(a+b))$, with $\varphi(a+b)=\phi(\pi^{-1}(a+b))$. So, the same can be said for $\psi_{_{\mathbb{Z}}}$ and a question that arises naturally is whether Theorem \ref{thm} can be obtained directly from these considerations. It seems the answer is  negative, since the definition of 
$\oplus$ on $\operatorname{Im}(\tilde{\Phi} ( G(X,r)))$   induces  the definition of  a  sum $\oplus$  on $\operatorname{Im}(\psi_{_{\mathbb{Z}}}( G(X,r)))$, different from the standard sum of matrices, and the  notion of spanning set is not the usual one anymore.
To illustrate how this new sum looks for matrices, we compute it  for two simple examples:
 \begin{gather*}
  \psi_{_{\mathbb{Z}}}( x_i)\oplus \psi_{_{\mathbb{Z}}}( x_j)=
 	 \begin{pmatrix}
\begin{matrix}
A_{\sigma_i}\\
\end{matrix}
&  \rvline & t_i\\	
\hline
\begin{matrix}
0&... &0\\
\end{matrix}
& \rvline
& 1 \\
\end{pmatrix} \oplus
\begin{pmatrix}
\begin{matrix}
A_{\sigma_j}\\
\end{matrix}
&  \rvline & t_j\\	
\hline
\begin{matrix}
0&... &0\\
\end{matrix}
& \rvline
& 1 \\
\end{pmatrix}=
\begin{pmatrix}
\begin{matrix}
A_{\sigma_i\sigma_i^{-1}(j)}\\
\end{matrix}
&  \rvline &t_i+t_j\\	
\hline
\begin{matrix}
0&... &0\\
\end{matrix}
& \rvline
&1 \\
\end{pmatrix}\\
\psi_{_{\mathbb{Z}}}( x_i)\oplus \psi_{_{\mathbb{Z}}}( x_j)\oplus \psi_{_{\mathbb{Z}}}( x_k)=\begin{pmatrix}
\begin{matrix}
A_{ \sigma_i\sigma_{\sigma_i^{-1}(j)}\sigma_{\sigma^{-1}_{\sigma_i^{-1}(j)}\sigma_i^{-1}(k)}}\\
\end{matrix}
&  \rvline &t_i+t_j+t_k\\	
\hline
\begin{matrix}
0&... &0\\
\end{matrix}
& \rvline
&1 \\
\end{pmatrix}, \; x_i,x_j,x_k \in X.
\end{gather*}

To conclude, it would be interesting to know whether  	$\operatorname{Im}(\hat{\psi})$  can be spanned by images of simple elements only, as in the case for $B_k$.


\bigskip\bigskip\noindent
{ Fabienne Chouraqui}

\smallskip\noindent
University of Haifa at Oranim, Israel.

\smallskip\noindent
E-mail: {\tt fabienne.chouraqui@gmail.com} \\

                {\tt fchoura@sci.haifa.ac.il}
\end{document}